\definecolor{sepia}{cmyk}{0, 0.83, 1, 0.70}
\newcommand\shorttitle{Outer Approximation Methods for VIs Defined over the SCFP}
\newcommand\shortauthors{A. Cegielski, A. Gibali, S. Reich and R. Zalas}
\newtheorem{theorem}{Theorem}
\newtheorem{lemma}[theorem]{Lemma}
\theoremstyle{definition}
\newtheorem{definition}[theorem]{Definition}
\newtheorem{example}[theorem]{Example}
\newtheorem{notation}[theorem]{Notation}
\newtheorem{remark}[theorem]{Remark}
\numberwithin{equation}{section} 
\numberwithin{theorem}{section}  
\renewenvironment{proof}[1][\proofname]
{\par
	\pushQED{$\blacksquare$} 
	\normalfont\topsep6\p@\@plus6\p@\relax
	\trivlist
	\item[\hskip\labelsep\bfseries#1\@addpunct{.}]
	\ignorespaces}
{\popQED \endtrivlist\@endpefalse}
\DeclareMathOperator*{\interior}{int}
\DeclareMathOperator*{\fix}{Fix}
\DeclareMathOperator*{\id}{Id}
\DeclareMathOperator*{\Argmin}{Argmin}
\DeclareMathOperator*{\argmin}{argmin}
\DeclareMathOperator*{\argmax}{argmax}
\DeclareMathOperator*{\prox}{prox}
\newcommand{\range}[1]{\mathcal R (#1)}
\newcommand{\nullset}[1]{\mathcal N (#1)}
\begin{document}

\title{\vspace{-6em}{Outer Approximation Methods for Solving Variational Inequalities Defined over the Solution Set of a Split Convex Feasibility Problem}}

\author{Andrzej Cegielski\thanks{Faculty of Mathematics, Computer Science and Econometrics, University of Zielona G\'{o}ra, ul. Szafrana 4a, 65-516 Zielona G\'{o}ra, Poland; a.cegielski@wmie.uz.zgora.pl.}
\and Aviv Gibali\thanks{Department of Mathematics, ORT Braude College, 2161002 Karmiel, Israel; avivg@braude.ac.il.}
\and Simeon Reich\thanks{Department of Mathematics, The Technion - Israel Institute of Technology, 3200003 Haifa, Israel;\newline sreich@technion.ac.il; rzalas@campus.technion.ac.il.}
\and Rafa\l\ Zalas\footnotemark[3]}

\maketitle

\begin{abstract}
We study variational inequalities which are governed by a strongly monotone and Lipschitz continuous operator $F$ over a closed and convex set $S$. We assume that $S=C\cap A^{-1}(Q)$ is the nonempty solution set of a (multiple-set) split convex feasibility problem, where $C$ and $Q$ are both closed and convex subsets of two real Hilbert spaces $\mathcal H_1$ and $\mathcal H_2$, respectively, and the operator $A$ acting between them is linear. We consider a modification of the gradient projection method the main idea of which is to replace at each step the metric projection onto $S$ by another metric projection onto a half-space which contains $S$. We propose three variants of a method for constructing the above-mentioned half-spaces by employing the multiple-set and the split structure of the set $S$. For the split part we make use of the Landweber transform.

\smallskip
\noindent \textbf{Keywords:} $CQ$-method, split convex feasibility problem, variational inequality.

\smallskip
\noindent \textbf{AMS Subject Classification:} 47H09, 47H10, 47J20, 47J25, 65K15.
\end{abstract}

\section{Introduction}
Let $\mathcal H_1$ and $\mathcal H_2$ be two real Hilbert spaces. In this paper we consider the following \textit{variational inequality} problem (VI($F$, $S$)) governed by an $L$-Lipschitz continuous and $\alpha$-strongly monotone operator $F\colon\mathcal H_1\rightarrow\mathcal H_1$ over a nonempty, closed and convex subset $S\subseteq \mathcal H_1$: find a point $x^*\in S$ for which the inequality
\begin{equation}\label{int:VI}
\langle F x^*, z-x^*\rangle\geq 0
\end{equation}
holds true for all $z\in S$.

It is well known that the \textit{gradient projection method} \cite{Gol64}
\begin{equation}\label{int:GP}
u_0\in\mathcal H_1, \qquad u_{k+1}:=P_S(u_k-\lambda F (u_k)), \qquad k=0,1,2,\ldots,
\end{equation}
generates a sequence which converges in norm to the unique solution of VI($F$, $S$) when $\lambda\in (0, \frac{2\alpha}{L^2})$. This is due to the fact that the operator $P_S(\id -\lambda F)$ becomes a strict contraction the fixed point of which coincides with the solution of VI($F$, $S$); see \cite[Theorem 46.C]{Zeidler1985} or \cite[Theorem 5]{CZ13}.

Gibali et al. \cite{GRZ17} have proposed the framework of \emph{outer approximation methods}, where the unknown parameter $\lambda$ is replaced by a null, non-summable sequence $\{\lambda_k\}_{k=0}^\infty\subseteq[0,\infty)$ and the difficult projection onto $S$ is replaced by a sequence of simpler to evaluate metric projections onto certain half-spaces $H_k \supseteq S$. The computational cost of such methods depends to a large extent on the construction of the half-spaces $H_k$ which, in \cite{GRZ17}, were obtained by using a given sequence of cutter operators  (see Definition \ref{def:QNE} below) $T_k \colon \mathcal H_1 \to \mathcal H_1$ with $S\subseteq \fix T_k$ for each $k=0,1,2\ldots$. This method can be written in the following way:
\begin{equation}
u_0\in\mathcal H_1;\qquad u_{k+1}:=R_k(u_k-\lambda_k F (u_k)), \qquad k=0,1,2,\ldots, \label{int:uk}
\end{equation}
where
\begin{equation}
R_k:=\id+\alpha_k(P_{H_k}-\id), \qquad \alpha_k\in[\varepsilon,2-\varepsilon], \qquad \varepsilon >0, \label{int:Rk}
\end{equation}
and
\begin{equation}
H_k:=\{z\in\mathcal H_1 \colon\langle u_k-T_k (u_k), z-T_k (u_k) \rangle\leq 0\}. \label{int:Hk}
\end{equation}
Its geometrical interpretation is presented in Figure \ref{fig:int}.

\begin{figure}[htb]
\centering
\includegraphics[bb=0 0 463.25 293.43, scale=0.5]{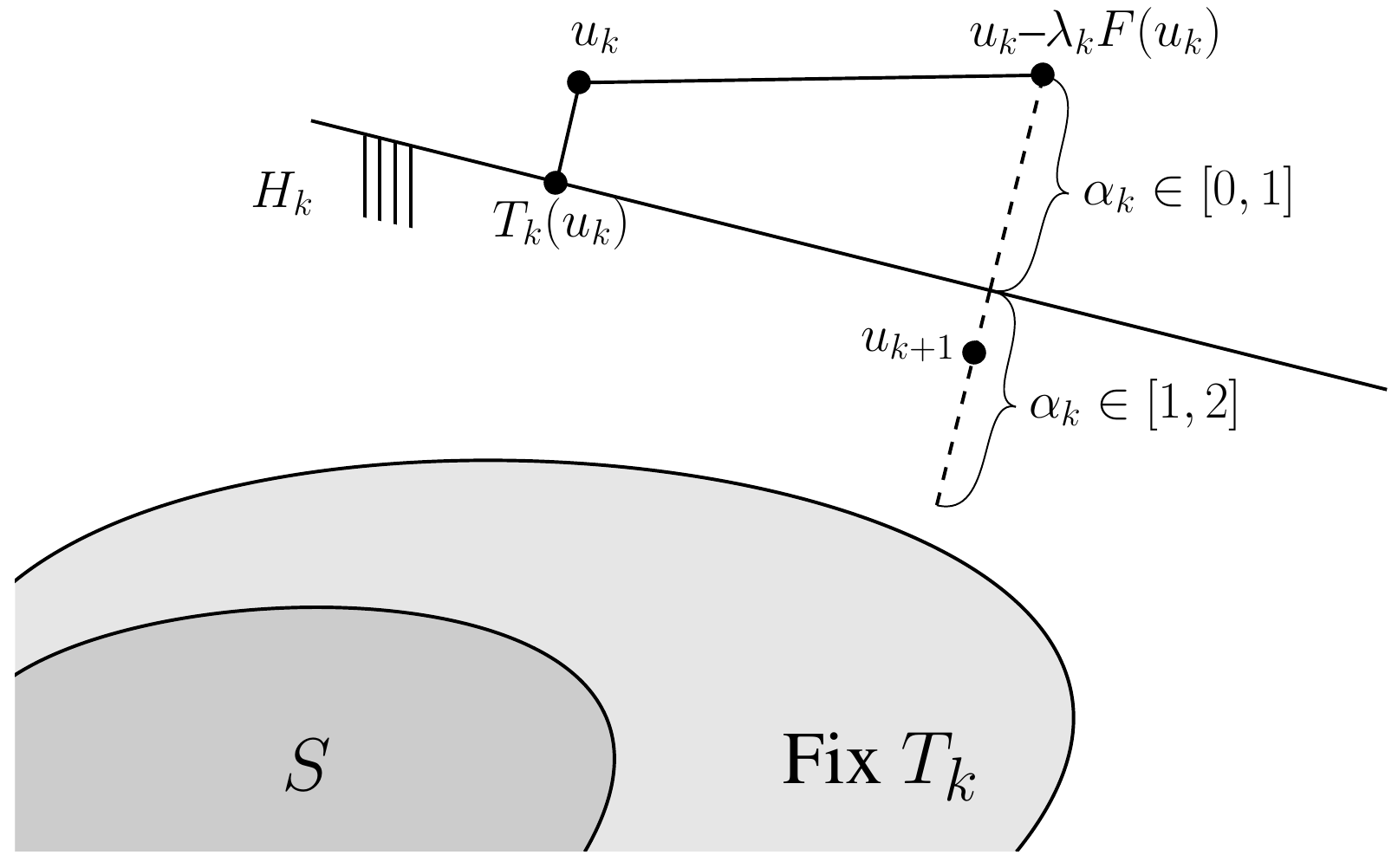}
\caption{One step of the outer approximation method \eqref{int:uk}--\eqref{int:Hk}.}
\label{fig:int}
\end{figure}

The outer approximation method has it roots in the work of Fukushima \cite{Fuk86}, where $S = \{x\in\mathbb R^d \colon s(x)\leq 0\}$ is a sublevel set of some convex function $s\colon \mathbb R^d \to \mathbb R$ and $H_k:=\{x\in\mathbb R^d \colon s(x_k)+\langle g_k, x-x_k\rangle\leq 0\}$ is a sublevel set of the linearization of $s$ at the point $x_k$ with $g_k\in\partial s(x_k)$. In this case $T_k:=P_s$ is the subgradient projection related to $s$. Other instances of this method can be found, for example, in \cite{CGRZ13, CG08, GRZ15, HY13, HT19}.

As it was already observed in \cite{GRZ17}, by a proper choice of the starting point, the outer approximation method can also be considered a particular case of the \emph{hybrid steepest descent method}
\begin{equation}\label{int:uk:HSD}
u_0\in\mathcal H;\qquad u_{k+1}:=R_k(u_k)-\lambda_k F (R_k(u_k)), \qquad k=0,1,2,\ldots,
\end{equation}
in which case $R_k\colon \mathcal H_1\to\mathcal H_1$ may be general $\rho_k$-strongly quasi-nonexpansive operators with $S\subseteq \fix R_k$ and $\inf_k \rho_k >0$. Other works related to the above method can be found, for example, in \cite{AK11, AK14, Ceg14, CM16, CZ13, CZ14, DY98, Yam01, YO04, YCL18} and even more general methods can be found in \cite{Ceg15}.

We now recall one of the main results of \cite[Theorem 3.1]{GRZ17} which concerns method \eqref{int:uk}. Note that exactly the same result holds for method \eqref{int:uk:HSD}; see \cite[Theorem 3.16]{Zal14} or \cite[Theorem 2.17]{GRZ17}.

\begin{theorem} \label{int:th:OAM}
  If $\lim_{k\to\infty} \lambda_k = 0 $, then the sequence $\{u_k\}_{k=0}^\infty$ is bounded. Moreover, if there is an integer $s\geq 1$ such that for each subsequence $\{n_k\}_{k=0}^\infty \subseteq\{k\}_{k=0}^\infty$, we have
  \begin{equation} \label{th:OAM:Regularity}%
    \lim_{k\to\infty}\sum_{l=0}^{s-1}\| T_{n_k-l} (u_{n_k-l}) - u_{n_k-l}\|=0
    \quad\Longrightarrow\quad\lim_{k\to\infty}d(u_{n_k},S)=0,
  \end{equation}
  then $\lim_{k\rightarrow\infty}d(u_k,S)=0$. If, in addition, $\sum_{k=0}^\infty \lambda_k = \infty$, then the sequence $\{u_k\}_{k=0}^\infty$ converges in norm to the unique solution of VI($F$, $S$).
\end{theorem}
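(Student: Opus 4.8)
The plan is to follow the standard template for gradient-projection / hybrid-steepest-descent schemes, with the metric projection onto $S$ replaced by the relaxed projections $R_k$ onto the half-spaces $H_k$. Write $y_k:=u_k-\lambda_k F(u_k)$, so that $u_{k+1}=R_k(y_k)$, and let $x^*$ be the unique solution of VI($F$,$S$); its existence and uniqueness are classical because $F$ is $L$-Lipschitz and $\alpha$-strongly monotone (\cite[Theorem 46.C]{Zeidler1985}). Since each $T_k$ is a cutter with $S\subseteq\fix T_k$, every $z\in S$ satisfies $\langle u_k-T_k(u_k),z-T_k(u_k)\rangle\le 0$; hence $S\subseteq H_k=\fix R_k$, the operator $R_k$ is $\rho$-strongly quasi-nonexpansive with $\rho:=\varepsilon/(2-\varepsilon)>0$, and $P_{H_k}(u_k)=T_k(u_k)$, so that $d(u_k,H_k)=\|u_k-T_k(u_k)\|$ and $\|u_{k+1}-y_k\|=\alpha_k\,d(y_k,H_k)$. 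For boundedness, once $\lambda_k\le\alpha/L^2$ the operator $\id-\lambda_k F$ is $\sqrt{1-\lambda_k(2\alpha-\lambda_k L^2)}$-contractive with constant $\le 1-\tfrac12\lambda_k\alpha$; combined with $\|R_k(\cdot)-x^*\|\le\|\cdot-x^*\|$ this gives $\|u_{k+1}-x^*\|\le(1-\tfrac12\lambda_k\alpha)\|u_k-x^*\|+\lambda_k\|Fx^*\|$, and an elementary induction yields $\|u_k-x^*\|\le\max\{\|u_{k_0}-x^*\|,2\|Fx^*\|/\alpha\}$ for all large $k$. Thus $\{u_k\}$ and $\{F(u_k)\}$ are bounded and $\lambda_k\|F(u_k)\|\to 0$.

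Next I would record the basic energy estimates. Expanding $\|y_k-z\|^2$ and using strong quasi-nonexpansivity of $R_k$ gives, for every $z\in S$ and all large $k$,
\[
\|u_{k+1}-z\|^2\le\|u_k-z\|^2+\lambda_k M_z-\rho\,\|u_{k+1}-y_k\|^2 ,
\]
where $M_z$ depends only on $z$ and the bounds from the first step; inserting the strong monotonicity of $F$ and, for $z=x^*$, the variational inequality $\langle Fx^*,P_S u_k-x^*\rangle\ge 0$ refines this (when $z=x^*$) to
\[
\|u_{k+1}-x^*\|^2\le(1-2\lambda_k\alpha)\|u_k-x^*\|^2+2\lambda_k\|Fx^*\|\,d(u_k,S)+\lambda_k^2\|F(u_k)\|^2-\rho\,\|u_{k+1}-y_k\|^2 .
\]
Finally, from $d(y_k,H_k)\ge d(u_k,H_k)-\|u_k-y_k\|$ (and the symmetric upper bound) one gets $\|u_{k+1}-y_k\|\ge\varepsilon\big(\|u_k-T_k(u_k)\|-\lambda_k\|F(u_k)\|\big)$, so that $\|u_{k+1}-y_k\|\to 0\iff\|u_k-T_k(u_k)\|\to 0$, and likewise along subsequences.

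To prove $d(u_k,S)\to 0$ I would use a Maingé-type dichotomy on $a_k:=\|u_k-x^*\|^2$. If $\{a_k\}$ is eventually non-increasing it converges, and then the first displayed inequality forces $\rho\|u_{k+1}-y_k\|^2\le(a_k-a_{k+1})+\lambda_k M_{x^*}\to 0$; hence $\|u_k-T_k(u_k)\|\to 0$, the premise of \eqref{th:OAM:Regularity} holds for every subsequence, and therefore $d(u_k,S)\to 0$. Otherwise Maingé's index map $\tau(k):=\max\{j\le k:a_{j+1}>a_j\}$ is defined for large $k$; evaluating the $d(\cdot,S)$-free form of the second estimate at $j=\tau(k)$ and using $a_{\tau(k)}<a_{\tau(k)+1}$ yields $\rho\|u_{\tau(k)+1}-y_{\tau(k)}\|^2<\lambda_{\tau(k)}\cdot(\text{const})\to 0$ together with $a_k\le a_{\tau(k)+1}$. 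The delicate point — and the step I expect to be the main obstacle — is to convert $\|u_{\tau(k)}-T_{\tau(k)}(u_{\tau(k)})\|\to 0$ into $d(u_{\tau(k)},S)\to 0$: since $\{\lambda_k\}$ need not be summable the plain quasi-Fejér inequality of the first display is unusable here, and since \eqref{th:OAM:Regularity} requires the whole window $\sum_{l=0}^{s-1}\|T_{\tau(k)-l}(u_{\tau(k)-l})-u_{\tau(k)-l}\|\to 0$, one must first propagate the smallness over $s$ consecutive indices near $\tau(k)$ before the regularity condition can be invoked. Once $d(u_{\tau(k)},S)\to 0$ is available, feeding it back into the estimate at $j=\tau(k)$ gives $a_{\tau(k)}\to 0$, hence $a_k\le a_{\tau(k)+1}\to 0$; so $d(u_k,S)\to 0$ in both cases.

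For the last assertion, with $d(u_k,S)\to 0$ in hand the refined estimate of the second step becomes
\[
\|u_{k+1}-x^*\|^2\le(1-2\lambda_k\alpha)\|u_k-x^*\|^2+\lambda_k\eta_k ,\qquad \eta_k:=2\|Fx^*\|\,d(u_k,S)+\lambda_k\|F(u_k)\|^2\to 0 ,
\]
and, since $\sum_k\lambda_k=\infty$, the standard real-sequence lemma (if $s_{k+1}\le(1-t_k)s_k+t_k\delta_k$ with $t_k\in[0,1]$, $\sum_k t_k=\infty$ and $\limsup_k\delta_k\le 0$, then $s_k\to 0$), applied with $t_k=2\lambda_k\alpha$ and $\delta_k=\eta_k/(2\alpha)$, gives $\|u_k-x^*\|\to 0$; that is, $\{u_k\}$ converges in norm to the unique solution of VI($F$,$S$). (One could alternatively deduce the whole statement by checking that \eqref{int:uk}--\eqref{int:Hk} is a particular instance of \eqref{int:uk:HSD} with the operators $R_k$ above and appealing to the cited convergence theory, but the self-contained argument sketched here keeps the role of the regularity condition \eqref{th:OAM:Regularity} transparent.)
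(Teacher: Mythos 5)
First, a point of comparison: the paper does not prove Theorem \ref{int:th:OAM} at all --- it is quoted verbatim from \cite[Theorem 3.1]{GRZ17} --- so your attempt can only be measured against the argument in that reference, which proceeds not by Maing\'e's dichotomy but by a real-sequence lemma tailored to the windowed condition \eqref{th:OAM:Regularity}. Most of your scaffolding is sound and standard: the identification $P_{H_k}(u_k)=T_k(u_k)$, hence $d(u_k,H_k)=\|u_k-T_k(u_k)\|$ and the $\rho$-SQNE property of $R_k$ with $\fix R_k=H_k\supseteq S$; the boundedness argument under $\lambda_k\to 0$; the quasi-Fej\'er inequality $\|u_{k+1}-z\|^2\le\|u_k-z\|^2+\lambda_kM_z-\rho\|u_{k+1}-y_k\|^2$; the two-sided comparison of $\|u_{k+1}-y_k\|$ with $\|u_k-T_k(u_k)\|$ (this is exactly \cite[Lemma 3.2]{GRZ17}, which the proof of Theorem \ref{th:main} also invokes as \eqref{pr:main:equivalence}); and the concluding Xu-type lemma once $d(u_k,S)\to 0$ is in hand. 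Case 1 of your dichotomy is also complete, since there the residual tends to zero along the whole sequence and the $s$-window in \eqref{th:OAM:Regularity} is filled automatically.

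The genuine gap is the one you flag yourself and then do not close: in Case 2 you obtain $\|u_{\tau(k)}-T_{\tau(k)}(u_{\tau(k)})\|\to 0$ at the single index $\tau(k)$, whereas \eqref{th:OAM:Regularity} demands $\sum_{l=0}^{s-1}\|T_{\tau(k)-l}(u_{\tau(k)-l})-u_{\tau(k)-l}\|\to 0$ over $s$ consecutive indices. The indices $\tau(k)-1,\dots,\tau(k)-s+1$ lie \emph{below} $\tau(k)$, where Maing\'e's construction gives no monotonicity information about $a_j=\|u_j-x^*\|^2$, so the inequality $\rho\,b_j\le (a_j-a_{j+1})+\lambda_jM$ cannot be forced to have a small right-hand side there; and since only $\lambda_k\to 0$ is assumed (no summability), you cannot sum the quasi-Fej\'er inequality to propagate smallness backwards either. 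As written, the argument therefore does not establish $d(u_k,S)\to 0$ in the oscillating case, which is precisely the case where the theorem's content lies. The repair used in \cite{GRZ17} (and in \cite{Zal14}) dispenses with the dichotomy altogether: one proves a lemma of the form ``if $a_{k+1}\le a_k-b_k+\varepsilon_k$ with $a_k,b_k,\varepsilon_k\ge 0$ bounded, $\varepsilon_k\to 0$, and for every subsequence $\sum_{l=0}^{s-1}b_{n_k-l}\to 0$ implies $a_{n_k}\to 0$, then $a_k\to 0$,'' established by extracting a uniform lower bound $\sum_{l=0}^{s-1}b_{k-l}\ge\eta$ on the set $\{k:a_k\ge\delta/2\}$ and telescoping over blocks of length $s$; applying it with $a_k=d^2(u_k,S)$, $b_k$ proportional to $\|u_k-T_k(u_k)\|^2$ and $\varepsilon_k=\lambda_kM$ yields $d(u_k,S)\to 0$ directly. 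Your final step (norm convergence under $\sum_k\lambda_k=\infty$) is then correct as stated.
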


In this paper we investigate the outer approximation method while assuming that $S$ is the solution set of the \emph{(multiple-set) split convex feasibility problem}, that is,
\begin{equation}\label{int:S}
S:=\left\{z \in \mathcal H_1 \mid z \in C:=\bigcap_{i\in I}C_i \quad \text{and} \quad Az\in Q:=\bigcap_{j\in J}Q_j\right\} = C \cap A^{-1}(Q),
\end{equation}
where $A\colon\mathcal H_1\rightarrow \mathcal H_2$ is a bounded linear operator and where each $C_i\subseteq \mathcal H_1$ and $Q_j\subseteq\mathcal H_2$ are closed and convex, $i\in I:=\{1,\ldots,m\}$, $j\in J:=\{1,\ldots,n\}$.

We propose a very general framework of constructing half-spaces $H_k$ that takes into account both the split and the multiple-set structure of the constraint set $S$. To this end, similarly to Theorem \ref{int:th:OAM}, we assume that we are given two sequences of strongly quasi-nonexpansive operators $\{\mathcal U_k\colon\mathcal H_1 \to \mathcal H_1\}_{k=0}^\infty$ and $\{\mathcal V_k\colon\mathcal H_2 \to \mathcal H_2\}_{k=0}^\infty$ for which $C\subseteq \fix \mathcal U_k$ and $Q\subseteq \fix \mathcal V_k$.

Examples of such operators can be obtained by simply using the metric projections $P_{C_i}$ and $P_{Q_j}$ organized in cyclic, simultaneous or block iterative ways. A similar strategy could be applied to sublevel sets, where $C_i := \{x\in \mathcal H_1 \colon c_i(x)\leq 0\}$ and $Q_j := \{y \in \mathcal H_2\colon q_j(y) \leq 0\}$ for weakly lower semicontinuous and convex functions $c_i\colon \mathcal H_1 \to \mathbb R$ and $q_j\colon\mathcal H_2 \to \mathbb R$ or, in the general fixed point setting, where $C_i:=\fix U_i$ and $Q_j:=\fix V_j$ with cutters $U_i$ and $V_j$. In the former case the metric projections should be replaced by subgradient projections $P_{c_i}$ and $P_{q_j}$ whereas in the latter case one should simply use $U_i$ and $V_j$. For more details see Example \ref{ex:UkVk} below.

The main difficulty in finding an explicit formulation for the half-spaces $H_k$, as they are defined in \eqref{int:Hk}, lies in the sets $A^{-1}(Q)$ and $A^{-1}(Q_j)$ the projections onto which are, in general, computationally expensive. We overcome this difficulty by using the so called \emph{(extrapolated) Landweber transform} (see Definitions \ref{def:Land} and \ref{def:ELand}). Roughly speaking, the Landweber transform can be considered a formalization of several techniques used for solving split feasibility problems many of which originate in the Landweber method \cite{Lan51}. In particular, it can be informally found in the well-known $CQ$-method introduced by Byrne \cite{Byr02, Byr04} and further studied in \cite{Ceg15b, CM16, CS09, Mou10, WX11, Xu10, Xu11}. The simultaneous counterparts of the $CQ$-method can be found in \cite{Ceg15b, CEKB05, CS09, MR07, Xu06}. Such a transform, when applied to an operator on $\mathcal H_2$, say $\mathcal V_k$, defines a new operator on $\mathcal H_1$, which we denote by $\mathcal L\{\mathcal V_k\}$. As it was summarized in \cite{CRZ19}, the Landweber transform preserves many of the relevant properties of its input operator. In addition, under some assumptions, which are satisfied in our case, we have $\fix \mathcal L\{\mathcal V_k\} =  A^{-1}(\fix \mathcal V_k)$, which makes it a very suitable tool for handling split problems. The extrapolated Landweber transform has it roots in \cite{LMWX12} and can also be found in \cite{Ceg16}.

Our main contribution in the present paper is to propose three approaches to define the half-spaces $H_k$, which under certain conditions, guarantee the norm convergence of the generated iterates to the unique solution of the variational inequality \eqref{int:VI} over the subset S defined by \eqref{int:S}. The first one is based on the product of the operators $\mathcal U_k$ and $\mathcal L\{\mathcal V_k\}$, which for $\mathcal U_k = P_C$ and $\mathcal V_k = P_Q$ resembles the $CQ$-method. The second one is based on averaging between $\mathcal U_k$ and $\mathcal L\{\mathcal V_k\}$, which corresponds to the simultaneous $CQ$-method, whereas the third variant relies on the alternating use of $\mathcal U_k$ and $\mathcal L\{\mathcal V_k\}$; see Theorem \ref{th:main} for more details. In our convergence analysis, we impose two conditions on the sequences $\{\mathcal U_k\}_{k=0}^\infty$ and $\{\mathcal V_k\}_{k=0}^\infty$ which, when combined with an additional bounded regularity of two families of sets, guarantee \eqref{th:OAM:Regularity}. In particular, when $S = C$ is the solution set of the convex feasibility problem, then we obtain another convergence result along the lines of Theorem \ref{int:th:OAM}; see Theorem \ref{th:main2}. Furthermore, we provide several examples of defining $\mathcal U_k$ and $\mathcal V_k$ depending on the representation of the constraint sets $C_i$ and $Q_j$; see Example \ref{ex:UkVk}.

Our paper is organized as follows. In section \ref{sec:preliminaries} we provide necessary tools to be used in our convergence analysis. In particular, we recall the closed range theorem, some basic properties of quasi-nonexpansive operators, regular operators and the Landweber transform. In section \ref{sec:main} we present our main result (Theorem \ref{th:main}) together with some examples.

\section{Preliminaries} \label{sec:preliminaries}
Let $\mathcal H,\ \mathcal H_1$ and $\mathcal H_2$ be real Hilbert spaces. We denote by $\nullset A$, $\range A$ and $\|A\|$ the null space, the range and the norm of a bounded linear operator $A\colon\mathcal H_1\rightarrow\mathcal H_2$, respectively. It is not difficult to see that
\begin{equation}\label{eq:smallNormA}
\|A\|:=\sup\{\|Ax\|\colon x\in \mathcal N(A)^\perp \text{ and } \|x\|=1\}.
\end{equation}
Analogously, we define
\begin{equation}\label{eq:smallNormA}
|A|:=\inf\{\|Ax\|\colon x\in \mathcal N(A)^\perp \text{ and } \|x\|=1\}.
\end{equation}

\begin{theorem}[Closed Range Theorem]\label{th:closedRange} Let $A\colon\mathcal H_1\rightarrow\mathcal H_2$ be a nonzero bounded linear operator. Then the following statements are equivalent:
\begin{multicols}{3}
\begin{enumerate}
    \item[$\mathrm{(i)}$] $\mathcal R(A)$ is closed;
    \item[$\mathrm{(iv)}$] $\mathcal R(A^*)$ is closed;
    \item[$\mathrm{(vii)}$] $\mathcal R(AA^*)$ is closed;
    \item[$\mathrm{(x)}$] $\mathcal R(A^*A)$ is closed;

    \item[$\mathrm{(ii)}$] $\mathcal R(A) = (\mathcal N(A^*)^\perp$;
    \item[$\mathrm{(v)}$] $\mathcal R(A^*) = \mathcal N(A)^\perp$;
    \item[$\mathrm{(viii)}$] $\mathcal R(AA^*) = \mathcal N(A A^*)^\perp$;
    \item[$\mathrm{(xi)}$] $\mathcal R(A^*A) = \mathcal N(A^* A)^\perp$;

    \item[$\mathrm{(iii)}$] $|A| >0$;
    \item[$\mathrm{(vi)}$] $|A^*| >0$;
    \item[$\mathrm{(ix)}$] $|A A^*| >0$;
    \item[$\mathrm{(xii)}$] $| A^*A|>0$.
\end{enumerate}
\end{multicols}
\noindent Moreover, we have
\begin{equation}
    |A| = |A^*| = \sqrt{|A^*A|} = \sqrt{|AA^*|}.
\end{equation}
\end{theorem}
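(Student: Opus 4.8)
The plan is to reduce the statement to a single-operator fact and then apply it four times, gluing the four groups together by means of the displayed norm identities. The single-operator fact I would prove first is: for any nonzero bounded linear operator $B\colon\mathcal H\to\mathcal K$ between Hilbert spaces, the conditions ``$\mathcal R(B)$ is closed'', ``$\mathcal R(B)=\mathcal N(B^*)^\perp$'' and ``$|B|>0$'' are equivalent. The equivalence of the first two uses only the always-valid identity $\overline{\mathcal R(B)}=\mathcal N(B^*)^\perp$ (coming from $\mathcal R(B)^\perp=\mathcal N(B^*)$) together with the fact that orthogonal complements are closed. For ``closed $\Rightarrow|B|>0$'' I would observe that $B$ restricts to a bounded bijection $\mathcal N(B)^\perp\to\mathcal R(B)$ of Banach spaces, so the bounded inverse theorem gives $\|x\|\le M\|Bx\|$ on $\mathcal N(B)^\perp$, hence $|B|\ge 1/M>0$. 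For the converse, if $|B|=c>0$ and $By_n\to y$, replace $y_n$ by $x_n:=P_{\mathcal N(B)^\perp}y_n$ (which leaves $By_n$ unchanged); then $\|x_n-x_m\|\le c^{-1}\|Bx_n-Bx_m\|$ forces $\{x_n\}$ to be Cauchy, and its limit $x$ satisfies $Bx=y$, so $\mathcal R(B)$ is closed.

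Second, I would establish the numerical identities. For $|A|=|A^*|$: since $\mathcal R(A^*)$ is dense in $\mathcal N(A)^\perp$ and $x\mapsto\|Ax\|/\|x\|$ is continuous there, $|A|$ equals the infimum of $\|AA^*w\|/\|A^*w\|$ over $0\ne w\in\mathcal N(A^*)^\perp$; the Cauchy--Schwarz estimate $\|A^*w\|^2=\langle AA^*w,w\rangle\le\|AA^*w\|\,\|w\|$ then gives $\|AA^*w\|/\|A^*w\|\ge\|A^*w\|/\|w\|\ge|A^*|$, so $|A|\ge|A^*|$, and symmetry gives equality. For $|A^*A|=|A|^2$: because $\mathcal N(A^*A)=\mathcal N(A)$ and $\mathcal N(A)^\perp$ reduces $A^*A$, the restriction $T:=A^*A|_{\mathcal N(A)^\perp}$ is a bounded positive self-adjoint operator and $|A^*A|=\inf_{\|x\|=1}\|Tx\|$. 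For a positive self-adjoint $T$ one has $\inf_{\|x\|=1}\|Tx\|=\inf_{\|x\|=1}\langle Tx,x\rangle=:m_0$: ``$\ge$'' is Cauchy--Schwarz, and for ``$\le$'' one applies the elementary inequality $\|Sx\|^2\le\|S\|\,\langle Sx,x\rangle$ (valid for $S\ge0$, via Cauchy--Schwarz for the semidefinite form $\langle S\cdot,\cdot\rangle$) to $S:=T-m_0$ along a sequence with $\langle Sx_n,x_n\rangle\to0$. Since $m_0=\inf_{\|x\|=1,\,x\perp\mathcal N(A)}\|Ax\|^2=|A|^2$, this gives $|A^*A|=|A|^2$; interchanging $A$ and $A^*$ gives $|AA^*|=|A^*|^2$, and combining everything yields $|A|=|A^*|=\sqrt{|A^*A|}=\sqrt{|AA^*|}$.

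Finally, since $A\ne0$ also forces $A^*$, $AA^*$ and $A^*A$ to be nonzero, applying the single-operator lemma separately to $B=A,A^*,AA^*,A^*A$ (and using $A^{**}=A$, $(AA^*)^*=AA^*$, $(A^*A)^*=A^*A$) yields the threefold equivalences within each of the four groups $\{(\mathrm i),(\mathrm{ii}),(\mathrm{iii})\}$, $\{(\mathrm{iv}),(\mathrm v),(\mathrm{vi})\}$, $\{(\mathrm{vii}),(\mathrm{viii}),(\mathrm{ix})\}$, $\{(\mathrm x),(\mathrm{xi}),(\mathrm{xii})\}$; and the four conditions $|A|>0$, $|A^*|>0$, $|AA^*|>0$, $|A^*A|>0$ are equivalent (in fact numerically linked) by the identities just proved, so all twelve conditions are equivalent and the ``moreover'' part is exactly what was shown. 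I expect the only genuinely delicate point to be the second step --- in particular, keeping track of the fact that $|A|$, $|A^*A|$, etc., are infima over $\mathcal N(\cdot)^\perp$ rather than over the whole space, and supplying the small lemma $\inf_{\|x\|=1}\|Tx\|=\inf_{\|x\|=1}\langle Tx,x\rangle$ for positive self-adjoint $T$; everything else is routine once the open mapping theorem is invoked in the first step.
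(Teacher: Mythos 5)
Your proposal is correct. It cannot be compared with an argument in the paper, because the paper does not prove this theorem at all---it defers entirely to Lemma 3.2 of the cited reference [CRZ19]---so what you have written is a genuine, self-contained proof where the paper only has a citation. Both pillars of your argument are sound. The per-operator lemma (closed range $\Leftrightarrow$ $\mathcal R(B)=\mathcal N(B^*)^\perp$ $\Leftrightarrow$ $|B|>0$) is handled correctly: the identity $\overline{\mathcal R(B)}=\mathcal N(B^*)^\perp$ gives the first equivalence, the bounded inverse theorem applied to the bijection $\mathcal N(B)^\perp\to\mathcal R(B)$ gives one direction of the second, and your converse correctly replaces $y_n$ by $P_{\mathcal N(B)^\perp}y_n$ before invoking the Cauchy criterion. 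The numerical identities are where the real work lies, and you navigate the two delicate points properly: the passage from the infimum over $\mathcal N(A)^\perp$ to the infimum over the dense subspace $\mathcal R(A^*)$ is legitimate because $x\mapsto\|Ax\|/\|x\|$ is continuous away from the origin, and the lemma $\inf_{\|x\|=1}\|Tx\|=\inf_{\|x\|=1}\langle Tx,x\rangle$ for positive self-adjoint $T$ (applied to $A^*A$ restricted to the reducing subspace $\mathcal N(A)^\perp=\mathcal N(A^*A)^\perp$) is correctly established via $\|Sx\|^2\le\|S\|\,\langle Sx,x\rangle$ with $S=T-m_0\,\id\ge 0$. With $|A|=|A^*|$ and $|A^*A|=|A|^2$, $|AA^*|=|A^*|^2$ in hand, the four triples of statements are linked numerically and all twelve conditions are equivalent; the ``moreover'' display is exactly what you proved. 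Two trivial housekeeping points you may wish to make explicit: $A\neq 0$ forces $A^*$, $AA^*$ and $A^*A$ to be nonzero and each $\mathcal N(B)^\perp$ to be nontrivial, so every infimum defining $|B|$ is taken over a nonempty set; and condition (ii) as printed contains a stray parenthesis, with your reading $\mathcal R(A)=\mathcal N(A^*)^\perp$ being the intended one.
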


\begin{proof}
See \cite[Lemma 3.2]{CRZ19}.
\end{proof}

\begin{remark}
Recall that the norm of $A$ satisfies $\|A\| = \|A^*\| = \sqrt{\|A^*A\|} = \sqrt{\|AA^*\|}$. Moreover, by the definition of $|A|$ and $\|A\|$, for all $x\in \nullset A^\perp$, we have
\begin{equation}
    |A|\cdot\|x\|\leq \|Ax\|\leq \|A\|\cdot\|x\|.
\end{equation}
\end{remark}

\subsection{Quasi-Nonexpansive Operators}\label{sec:QNE}

For a given $U\colon\mathcal H\rightarrow\mathcal H$ and $\alpha
\in (0,\infty)$, the operator $U_{\alpha}:=\id +\alpha(U-\id )$ is called an $\alpha$-\textit{relaxation of} $U$, where by $\id $ we denote the identity operator. We call $\alpha$ a \textit{relaxation parameter}. It is easy to see that for every such $\alpha$, $\fix U = \fix U_\alpha$, where $\fix U:=\{z\in\mathcal H\mid U(z)=z\}$ is the \textit{fixed point set} of $U$.

\begin{definition}\label{def:QNE}
Let $U\colon \mathcal H\rightarrow\mathcal H$ be an
operator with a fixed point, that is, $\fix U
\neq \emptyset$. We say that $U$ is
\begin{enumerate}[(i)]
\item \textit{quasi-nonexpansive} (QNE) if for all $x\in\mathcal H$ and all $z\in\fix U$,
\begin{equation}
\| U(x) -z\|\leq\| x-z\|;
\end{equation}

\item  $\rho$\textit{-strongly quasi-nonexpansive} ($\rho$-SQNE), where $\rho\geq 0$, if for all $x\in\mathcal H$ and all $z\in\fix U$,
\begin{equation}
\| U(x)-z\|^2\leq\| x-z\|^2-\rho\| U(x) -x\|^2;
\end{equation}

\item a \textit{cutter} if for all $x\in\mathcal H$ and all $z\in\fix U$,
\begin{equation}
\langle z-U(x) ,x-U(x) \rangle\leq 0.
\end{equation}
\end{enumerate}
\end{definition}

For a historical and mathematical overview of the above-mentioned operators we refer the reader to \cite{Ceg12}.

\begin{theorem}
\label{th:cuttersAndQNE} Let $U\colon\mathcal H\to\mathcal H$
be an operator with $\fix U\neq\emptyset$ and let $\rho\geq 0$. Then the operator $U$ is $\rho$-SQNE if and only if $\id +\frac{1+\rho}{2}(U-\id )$ is a cutter.
\end{theorem}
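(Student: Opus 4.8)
The plan is to reduce everything to a single algebraic identity relating the expression $\|U(x)-z\|^2$ to $\|x-z\|^2$ and $\|U(x)-x\|^2$, and then compare it with the inner-product inequality defining a cutter. First I would introduce the relaxed operator $V := \id + \tfrac{1+\rho}{2}(U - \id)$, so that $V(x) - x = \tfrac{1+\rho}{2}(U(x)-x)$ and, for any $z$, $V(x) - z = (x-z) + \tfrac{1+\rho}{2}(U(x)-x)$. Note $\fix V = \fix U$, so the relevant $z$'s are the same for both operators. The key observation is the polarization-type identity, valid for all $x,z\in\mathcal H$ and any scalar $\gamma>0$:
\begin{equation*}
\|x - z\|^2 - \|U(x) - z\|^2 = 2\langle z - U(x),\, x - U(x)\rangle - \|U(x)-x\|^2 .
\end{equation*}
This is just $\|a\|^2 - \|b\|^2 = 2\langle a-b, b\rangle + \|a-b\|^2$ applied with $a = x-z$, $b = U(x)-z$, rearranged; I would verify it by expanding both sides.

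With this identity in hand, the $\rho$-SQNE inequality $\|U(x)-z\|^2 \le \|x-z\|^2 - \rho\|U(x)-x\|^2$ becomes, after substituting,
\begin{equation*}
2\langle z - U(x),\, x - U(x)\rangle - \|U(x)-x\|^2 \ge \rho \|U(x)-x\|^2,
\end{equation*}
that is, $\langle z - U(x),\, x - U(x)\rangle \ge \tfrac{1+\rho}{2}\|U(x)-x\|^2$. So $U$ is $\rho$-SQNE (for all $x$ and all $z\in\fix U$) if and only if this last inequality holds for all $x$ and all $z\in\fix U$. It remains to show that the latter is exactly the statement that $V$ is a cutter. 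For that I would compute, using $x - V(x) = -\tfrac{1+\rho}{2}(U(x)-x)$ and $z - V(x) = (z - x) - \tfrac{1+\rho}{2}(U(x)-x) = (z - U(x)) + (U(x)-x)\bigl(1 - \tfrac{1+\rho}{2}\bigr)$,
\begin{equation*}
\langle z - V(x),\, x - V(x)\rangle = -\tfrac{1+\rho}{2}\Bigl\langle (z-U(x)) + \tfrac{1-\rho}{2}(U(x)-x),\ U(x)-x\Bigr\rangle,
\end{equation*}
and expand the inner product. After collecting terms this equals $-\tfrac{1+\rho}{2}\bigl[\langle z - U(x), x - U(x)\rangle\cdot(-1) + \dots\bigr]$; the bookkeeping should collapse so that $\langle z - V(x), x - V(x)\rangle \le 0$ is equivalent to $\langle z - U(x), x - U(x)\rangle \ge \tfrac{1+\rho}{2}\|U(x)-x\|^2$, matching the condition derived above. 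Chaining the two equivalences gives the theorem.

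The only mildly delicate point is the last algebraic step — keeping the signs straight when expressing $\langle z - V(x), x - V(x)\rangle$ through the quantities $\langle z - U(x), x - U(x)\rangle$ and $\|U(x)-x\|^2$; everything else is a direct substitution. One clean way to avoid sign errors is to apply the very same polarization identity to $V$ in place of $U$ (replacing $\rho$ by $0$, since a cutter is precisely a $0$-SQNE operator by the identity above with $\gamma = \tfrac12$): then $V$ is a cutter iff $\langle z - V(x), x - V(x)\rangle \le 0$ iff (by the identity) $\|V(x)-z\|^2 \le \|x-z\|^2 - \|V(x)-x\|^2$, and substituting $V(x) - x = \tfrac{1+\rho}{2}(U(x)-x)$ together with the expansion of $\|V(x)-z\|^2 = \|x-z\|^2 + (1+\rho)\langle U(x)-x, x-z\rangle + \tfrac{(1+\rho)^2}{4}\|U(x)-x\|^2$ reduces it, via $\langle U(x)-x, x - z\rangle = \langle U(x)-x, x - U(x)\rangle + \langle U(x)-x, U(x)-z\rangle$, to the $\rho$-SQNE inequality for $U$. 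I would present it in whichever of these two forms turns out shorter.
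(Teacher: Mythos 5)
Your overall strategy --- expand the squared norms and reduce both the $\rho$-SQNE inequality for $U$ and the cutter inequality for $V:=\id+\frac{1+\rho}{2}(U-\id)$ to one and the same inner-product inequality --- is the right one (the paper itself gives no proof, only a citation to \cite[Corollary 2.1.43]{Ceg12}, which argues along exactly these lines). However, your key identity is false as stated: with $a=x-z$, $b=U(x)-z$, so that $a-b=x-U(x)$, the expansion $\|a\|^2=\|b\|^2+2\langle a-b,b\rangle+\|a-b\|^2$ gives
\begin{equation*}
\|x-z\|^2-\|U(x)-z\|^2=\|U(x)-x\|^2-2\langle z-U(x),x-U(x)\rangle,
\end{equation*}
the \emph{negative} of your right-hand side (note $\langle a-b,b\rangle=\langle x-U(x),U(x)-z\rangle=-\langle z-U(x),x-U(x)\rangle$). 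Consequently the intermediate characterization you derive, ``$U$ is $\rho$-SQNE iff $\langle z-U(x),x-U(x)\rangle\ge\frac{1+\rho}{2}\|U(x)-x\|^2$,'' is wrong; the correct one is $\langle z-U(x),x-U(x)\rangle\le\frac{1-\rho}{2}\|U(x)-x\|^2$, equivalently $\langle z-x,U(x)-x\rangle\ge\frac{1+\rho}{2}\|U(x)-x\|^2$. (Sanity check: at $\rho=1$ your version would force a cutter such as $P_C$ to satisfy $\langle z-P_C(x),x-P_C(x)\rangle\ge\|P_C(x)-x\|^2>0$ for $x\notin C$, contradicting the very definition of a cutter.) With the wrong target inequality, the ``bookkeeping collapse'' you defer in the second half cannot come out right, so the proof as planned does not close.

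The alternative route you sketch in your last paragraph is, up to one mislabel, the correct and standard argument: a cutter is a $1$-SQNE operator, not a $0$-SQNE one ($0$-SQNE is merely QNE), and the inequality you actually write, $\|V(x)-z\|^2\le\|x-z\|^2-\|V(x)-x\|^2$, is indeed the $\rho=1$ case. Carrying that computation out, both the cutter condition for $V$ and the $\rho$-SQNE condition for $U$ reduce, after expanding $\|V(x)-z\|^2$ and $\|U(x)-z\|^2$ around $x-z$ and using $V(x)-x=\frac{1+\rho}{2}(U(x)-x)$ together with $\fix V=\fix U$, to the single inequality $\langle U(x)-x,\,z-x\rangle\ge\frac{1+\rho}{2}\|U(x)-x\|^2$ for all $x\in\mathcal H$ and $z\in\fix U$. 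I recommend writing the proof in that form and discarding the faulty identity.
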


\begin{proof}
See, for example, \cite[Corollary 2.1.43]{Ceg12}.
\end{proof}

\begin{theorem} \label{th:SQNE:conv}
Let $U_{i}\colon\mathcal H\to\mathcal H$ be $\rho_{i}$-SQNE, where $\rho_i > 0$, $i=1,\ldots,m$. If $\bigcap_{i=1}^m\fix  U_{i} \neq\emptyset$, then the operator $U := \sum_{i=1}^m \omega_i U_i$, where $\omega_{i}>0$, $\sum_{i=1}^m\omega_{i}=1$, is $\rho$-SQNE with
\begin{equation}
  \rho := \left[\left( \sum_{i=1}^{m}\frac{\omega_i}{\rho_i+1}\right)^{-1} -1\right]
  \geq \min \rho_i >0
\end{equation}
and $\fix U = \bigcap_{i=1}^m\fix  U_{i}$. Moreover, for all $x\in\mathcal H$, we have
\begin{equation}
  2d(x,\fix U) \cdot \left\| U(x)-x\right\|\geq \sum_{i=1}^m\omega_i\rho_i\|U_i(x)-x\|^2.
\end{equation}
\end{theorem}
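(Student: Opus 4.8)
The plan is to prove the three assertions of Theorem~\ref{th:SQNE:conv} in order: first the formula for $\rho$, then the identification of $\fix U$, and finally the quantitative inequality involving $d(x,\fix U)$. Throughout I would fix $x\in\mathcal H$ and $z\in\bigcap_{i=1}^m\fix U_i$ and work directly from the definition of $\rho_i$-strong quasi-nonexpansivity. The starting identity is the convexity of the squared norm together with the expansion of $\|U(x)-x\|^2$; it will be convenient to set $d_i:=\|U_i(x)-x\|$ and $d:=\|U(x)-x\|$, and to recall that since $U(x)-x=\sum_i\omega_i(U_i(x)-x)$, Jensen's inequality gives $d^2\le\sum_i\omega_i d_i^2$.

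For the first claim I would estimate $\|U(x)-z\|^2$. Using $U(x)=\sum_i\omega_i U_i(x)$ and the convexity of $\|\cdot\|^2$, we get $\|U(x)-z\|^2\le\sum_i\omega_i\|U_i(x)-z\|^2\le\sum_i\omega_i(\|x-z\|^2-\rho_i d_i^2)=\|x-z\|^2-\sum_i\omega_i\rho_i d_i^2$. To put this in $\rho$-SQNE form I need a lower bound of the shape $\sum_i\omega_i\rho_i d_i^2\ge\rho\,d^2=\rho\,\|\sum_i\omega_i(U_i(x)-x)\|^2$. This is exactly where the announced value of $\rho$ comes from: writing $\rho_i d_i^2=(\rho_i+1)d_i^2-d_i^2$, one applies the Cauchy--Schwarz / power-mean inequality $\big(\sum_i\omega_i v_i\big)^2\le\big(\sum_i\frac{\omega_i}{\rho_i+1}\big)\big(\sum_i\omega_i(\rho_i+1)\|v_i\|^2\big)$ with $v_i:=U_i(x)-x$, which yields $\sum_i\omega_i(\rho_i+1)d_i^2\ge\big(\sum_i\frac{\omega_i}{\rho_i+1}\big)^{-1}d^2$. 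Combining and rearranging produces $\sum_i\omega_i\rho_i d_i^2\ge\big[(\sum_i\frac{\omega_i}{\rho_i+1})^{-1}-1\big]d^2$, which is the claimed $\rho$; note the intermediate inequality $\sum_i\omega_i\rho_i d_i^2\ge\sum_i\omega_i\rho_i d_i^2$ is of course trivial, and the power-mean step is the only nontrivial ingredient. The lower bound $\rho\ge\min_i\rho_i$ follows from the convexity of $t\mapsto 1/(t+1)$ (or directly from $\sum_i\frac{\omega_i}{\rho_i+1}\le\frac{1}{\min_i\rho_i+1}$), and positivity is then immediate since $\rho_i>0$.

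For the second claim, $\fix U=\bigcap_i\fix U_i$: the inclusion $\supseteq$ is trivial. For $\subseteq$, take $x\in\fix U$; since $\bigcap_i\fix U_i\neq\emptyset$ we may use the $\rho$-SQNE inequality just proved at this $x$ with any $z\in\bigcap_i\fix U_i$, giving $\|x-z\|^2\le\|x-z\|^2-\rho\|U(x)-x\|^2$, hence $U(x)=x$ forces nothing new — so instead I would argue from the case-of-equality in Jensen's inequality: from $\|U(x)-z\|^2\le\sum_i\omega_i\|U_i(x)-z\|^2$ and $U(x)=x$ we would get $\|x-z\|^2\le\sum_i\omega_i\|U_i(x)-z\|^2\le\|x-z\|^2$, so $\|U_i(x)-z\|=\|x-z\|$ for every $i$; plugging back into the $\rho_i$-SQNE inequality $\|U_i(x)-z\|^2\le\|x-z\|^2-\rho_i\|U_i(x)-x\|^2$ and using $\rho_i>0$ gives $U_i(x)=x$ for all $i$. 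This is the step I expect to require the most care, since one must correctly exploit strict convexity / the equality condition rather than just the $\rho$-SQNE inequality for the averaged operator.

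For the third claim, the quantitative estimate, I would combine the first two: taking $z:=P_{\fix U}(x)$ (which exists because $\fix U=\bigcap_i\fix U_i$ is closed, convex and nonempty, being an intersection of fixed point sets of QNE operators), the chain of inequalities from the first part gives $\|U(x)-z\|^2\le\|x-z\|^2-\sum_i\omega_i\rho_i d_i^2$, so $\sum_i\omega_i\rho_i d_i^2\le\|x-z\|^2-\|U(x)-z\|^2=\|x-z\|^2-\|(x-z)-(x-U(x))\|^2=2\langle x-z,x-U(x)\rangle-\|U(x)-x\|^2\le 2\|x-z\|\cdot\|U(x)-x\|$, where the last step is Cauchy--Schwarz and dropping the nonnegative term. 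Since $\|x-z\|=d(x,\fix U)$, this is exactly $2d(x,\fix U)\cdot\|U(x)-x\|\ge\sum_i\omega_i\rho_i\|U_i(x)-x\|^2$, completing the proof. The only subtlety here is to justify that $\fix U$ is closed (so that the projection is well defined); this is standard since each $U_i$, being $\rho_i$-SQNE, is quasi-nonexpansive and therefore has closed fixed point set, and intersections of closed sets are closed.
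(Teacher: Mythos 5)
Your arguments for the second and third claims are essentially correct: the identification $\fix U=\bigcap_{i}\fix U_i$ via the equality case in $\|x-z\|^2\le\sum_i\omega_i\|U_i(x)-z\|^2\le\|x-z\|^2$ works, and the chain $\sum_i\omega_i\rho_i\|U_i(x)-x\|^2\le\|x-z\|^2-\|U(x)-z\|^2=2\langle x-z,x-U(x)\rangle-\|U(x)-x\|^2\le 2\,d(x,\fix U)\,\|U(x)-x\|$ with $z=P_{\fix U}(x)$ is fine. The proof of the first claim, however, has a genuine gap at the step ``combining and rearranging produces $\sum_i\omega_i\rho_i d_i^2\ge\rho\,d^2$''. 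From $\sum_i\omega_i(\rho_i+1)d_i^2\ge c\,d^2$ with $c:=\bigl(\sum_i\omega_i/(\rho_i+1)\bigr)^{-1}$ you can only deduce $\sum_i\omega_i\rho_i d_i^2\ge c\,d^2-\sum_i\omega_i d_i^2$, and Jensen gives $\sum_i\omega_i d_i^2\ge d^2$, which points the wrong way. In fact the inequality you need is false: take $m=2$, $\omega_1=\omega_2=1/2$, $\rho_1=100$, $\rho_2=1/100$ (then $c=2$ and $\rho=1$), $U_1=\id$ and $U_2=\id+\tfrac{2}{1.01}(P_{\{0\}}-\id)$ on $\mathbb R$, and $x=1$; one computes $\sum_i\omega_i\rho_i d_i^2\approx 0.02$ while $\rho\,d^2\approx 0.98$. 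The same example shows that your starting estimate $\|U(x)-z\|^2\le\|x-z\|^2-\sum_i\omega_i\rho_i d_i^2$, obtained from plain convexity of $\|\cdot\|^2$, is itself too weak to give $\rho$-SQNE with the stated $\rho$: convexity discards the variance term $\tfrac12\sum_{i,j}\omega_i\omega_j\|U_i(x)-U_j(x)\|^2$, which is exactly what carries the strength here.

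The repair keeps your Cauchy--Schwarz step but changes the starting point. Rewrite each hypothesis $\|U_i(x)-z\|^2\le\|x-z\|^2-\rho_i\|U_i(x)-x\|^2$ in inner-product form as $2\langle U_i(x)-x,\,z-x\rangle\ge(1+\rho_i)\|U_i(x)-x\|^2$ (this is the cutter characterization of Theorem \ref{th:cuttersAndQNE}); summing with weights $\omega_i$ gives $2\langle U(x)-x,\,z-x\rangle\ge\sum_i\omega_i(1+\rho_i)d_i^2$. Then expand exactly: $\|U(x)-z\|^2=\|x-z\|^2+\|U(x)-x\|^2-2\langle U(x)-x,\,z-x\rangle\le\|x-z\|^2+d^2-\sum_i\omega_i(1+\rho_i)d_i^2\le\|x-z\|^2+d^2-c\,d^2$, where the last inequality is precisely your power-mean estimate. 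This is the $\rho$-SQNE inequality with $\rho=c-1$, and your bound $\rho\ge\min_i\rho_i>0$ then stands as written. (For reference, the paper gives no proof of this theorem; it cites Cegielski's book and [CZ14], whose argument is the one just sketched.)
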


\begin{proof}
See \cite[Theorem 2.1.50]{Ceg12} and \cite[Proposition 4.5]{CZ14}.
\end{proof}

\begin{theorem} \label{th:SQNE:prod}
Let $U_{i}\colon\mathcal H\to\mathcal H$ be $\rho_{i}$-SQNE. If $\bigcap_{i=1}^m\fix  U_{i} \neq\emptyset$, then the operator $ U := U_{m}\ldots U_{1}$ is $\rho$-SQNE with
\begin{equation}
  \rho := \left( \sum_{i=1}^{m}\frac{1}{\rho_i}\right)^{-1}
  \geq \frac{\min \rho_i}{m} >0
\end{equation}
and $\fix U = \bigcap_{i=1}^m\fix  U_{i}$. Moreover, for all $x\in\mathcal H$, we have
\begin{equation}
  2d(x, \fix U) \cdot \| U(x)-x\|
  \geq \sum_{i=1}^m\rho_i\|Q_i(x) - Q_{i-1}(x)\|^2,
\end{equation}
where $Q_i := U_i\ldots U_1$ and $Q_0:=\id$.
\end{theorem}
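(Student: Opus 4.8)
The plan is to derive a single telescoping estimate and then read off all three assertions from it. Fix $z\in\bigcap_{i=1}^m\fix U_i$, which is nonempty by hypothesis, and apply the $\rho_i$-SQNE inequality of Definition \ref{def:QNE}(ii) for $U_i$ at the point $Q_{i-1}(x)$; since $U_i(Q_{i-1}(x))=Q_i(x)$ this reads
\begin{equation*}
\|Q_i(x)-z\|^2\leq\|Q_{i-1}(x)-z\|^2-\rho_i\|Q_i(x)-Q_{i-1}(x)\|^2,\qquad i=1,\ldots,m.
\end{equation*}
Summing over $i$ and cancelling the telescoping terms (recall $Q_0=\id$ and $Q_m=U$) yields the master inequality
\begin{equation*}
\|U(x)-z\|^2\leq\|x-z\|^2-\sum_{i=1}^m\rho_i\|Q_i(x)-Q_{i-1}(x)\|^2 .\tag{$\ast$}
\end{equation*}

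From $(\ast)$ the fixed point identity follows at once: the inclusion $\bigcap_{i=1}^m\fix U_i\subseteq\fix U$ is clear, and conversely if $U(x)=x$ then $\|U(x)-z\|=\|x-z\|$ forces $\sum_{i=1}^m\rho_i\|Q_i(x)-Q_{i-1}(x)\|^2=0$; since each $\rho_i>0$ we get $Q_i(x)=Q_{i-1}(x)$ for every $i$, and an induction on $i$ starting from $Q_0(x)=x$ gives $U_i(x)=x$ for all $i$, i.e.\ $x\in\bigcap_{i=1}^m\fix U_i$.

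To obtain the strong quasi-nonexpansivity constant I would bound $\|U(x)-x\|$ from above in terms of the sum in $(\ast)$. Set $v_i:=Q_i(x)-Q_{i-1}(x)$, so that $U(x)-x=\sum_{i=1}^m v_i$. The triangle inequality in $\mathcal H$ followed by the Cauchy--Schwarz inequality for the scalars $\rho_i^{-1/2}$ and $\rho_i^{1/2}\|v_i\|$ gives
\begin{equation*}
\|U(x)-x\|^2\leq\Big(\sum_{i=1}^m\|v_i\|\Big)^2\leq\Big(\sum_{i=1}^m\frac{1}{\rho_i}\Big)\Big(\sum_{i=1}^m\rho_i\|v_i\|^2\Big),
\end{equation*}
that is, $\sum_{i=1}^m\rho_i\|v_i\|^2\geq\rho\|U(x)-x\|^2$ with $\rho=(\sum_{i=1}^m\rho_i^{-1})^{-1}$. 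Substituting into $(\ast)$ yields $\|U(x)-z\|^2\leq\|x-z\|^2-\rho\|U(x)-x\|^2$, so $U$ is $\rho$-SQNE; the bound $\rho\geq\min_i\rho_i/m$ is the elementary estimate $\sum_{i=1}^m\rho_i^{-1}\leq m/\min_i\rho_i$, and $\rho>0$ because every $\rho_i>0$.

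For the last inequality, return to $(\ast)$, factor its right-hand side as a difference of squares, and estimate
\begin{equation*}
\sum_{i=1}^m\rho_i\|v_i\|^2\leq\big(\|x-z\|-\|U(x)-z\|\big)\big(\|x-z\|+\|U(x)-z\|\big)\leq 2\|U(x)-x\|\cdot\|x-z\|,
\end{equation*}
where the first factor is bounded by $\|U(x)-x\|$ via the reverse triangle inequality and the second by $2\|x-z\|$ via quasi-nonexpansivity ($\|U(x)-z\|\leq\|x-z\|$, itself a consequence of $(\ast)$); taking the infimum over $z\in\fix U$ gives the claimed estimate. I do not anticipate a serious obstacle here: the only step requiring real care is the fixed point identity, where one must genuinely invoke the \emph{strong} quasi-nonexpansivity with $\rho_i>0$ rather than plain quasi-nonexpansivity, while everything else is telescoping combined with Cauchy--Schwarz.
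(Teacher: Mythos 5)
Your proof is correct and self-contained, whereas the paper simply defers to \cite[Theorem 2.1.48]{Ceg12} and \cite[Proposition 4.6]{CZ14}; the telescoping inequality $(\ast)$, the Cauchy--Schwarz step yielding $\rho=(\sum_i\rho_i^{-1})^{-1}$, and the difference-of-squares estimate for the ``moreover'' part are exactly the standard arguments behind those citations. The only point worth making explicit is that the hypothesis must be read with $\rho_i>0$ (as the formula for $\rho$ already forces), which you correctly flag as essential for the fixed point identity $\fix U=\bigcap_{i=1}^m\fix U_i$.
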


\begin{proof}
See \cite[Theorems 2.1.48]{Ceg12} and \cite[Proposition 4.6]{CZ14}.
\end{proof}

\begin{example}[Subgradient Projection] \label{ex:subProj}
  Let $f\colon \mathcal{H}\to \mathbb{R}$ be a weakly lower semicontinuous and convex function with nonempty sublevel set $S:=\{x\in \mathcal{H}\colon f(x)\leq 0\}$. For each $x\in \mathcal{H}$, let  $g(x)$  be a chosen subgradient from the subdifferential set $\partial f(x):=\{g\in \mathcal{H}\colon f(y)\geq f(x)+\langle g,y-x\rangle \text{ for all }y\in \mathcal{H}\}$,  which, by \cite[Proposition 16.27]{BC17}, is nonempty. The \emph{subgradient projection} operator $P_{f}\colon \mathcal{H}\to \mathcal{H}$ is defined  by
  \begin{equation}
    P_{f}(x):=x-\frac{f(x)}{\Vert g(x)\Vert ^{2}}g(x)
  \end{equation}
  whenever $f(x)>0$ and $P_f(x):=x$, otherwise. One can show that $P_f$ is a cutter and $ \fix P_{f}=S$; see, for example, \cite[Corollary 4.2.6]{Ceg12}.
\end{example}

\begin{example}[Proximal Operator]
  Let $f\colon\mathcal H \to \mathbb R$ be a weakly lower semicontinuous and convex function. The \emph{proximal operator}, defined by
  \begin{equation}\label{}
    \prox\nolimits_f (x) := \argmin_{y\in \mathcal H} \left( f(y)+\frac 1 2 \|y-x\|^2\right),
  \end{equation}
  is firmly nonexpansive and $\fix (\prox_f)=\Argmin_{x\in\mathcal H} f(x)$; see \cite[Propositions 12.28 and 12.29]{BC17}. Thus if $f$ has at least one minimizer, then $\prox_f$ is a cutter; see \cite[Theorem 2.2.5.]{Ceg12}.
\end{example}

\subsection{Landweber Transform}

Let $A:\mathcal{H}_{1}\rightarrow \mathcal{H}_{2}$ be a nonzero bounded linear
operator, let $V \colon\mathcal{H}_{2}\to \mathcal{H}_{2}$ be an arbitrary  operator and let $\sigma \colon \mathcal H_1 \to [1,\infty)$ be a given functional.

\begin{definition}\label{def:Land}
The operator $\mathcal{L}\{V\}:\mathcal{H}_{1}\to \mathcal{H}_{1}$ defined by
\begin{equation}
\mathcal{L}\{V\}(x):= x+\frac{1}{\| A\|^{2}}A^*\big(V(Ax)-Ax\big),
 \quad x\in \mathcal H_1,
\end{equation}%
is called the \emph{Landweber operator} (corresponding to $V$). The operation $V\mapsto \mathcal{L}\{V\}$ is called the \textit{Landweber transform}.
\end{definition}

\begin{definition}\label{def:ELand}
The operator $\mathcal{L}_\sigma\{V\}:\mathcal{H}_{1}\to \mathcal{H}_{1}$ defined by
\begin{equation}
\mathcal{L}_\sigma\{V\}(x):= x+\frac{\sigma(x)}{\| A\|^{2}}A^*\big(V(Ax)-Ax\big),
 \quad x\in \mathcal H_1,
\end{equation}%
is called the \emph{extrapolated Landweber operator} (corresponding to $V$ and $\sigma$). The operation $V\mapsto \mathcal{L}_\sigma\{V\}$ is called the \textit{extrapolated Landweber transform}.
\end{definition}

\begin{remark}\label{rem:tau}
In this paper we only consider those extrapolation functionals $\sigma$ which are bounded from above by $\tau \colon\mathcal H_1\to [1,\infty)$ defined by
\begin{equation}\label{rem:tau:eq}
  \tau(x):= \left(\frac{\|A\|\cdot \|V(Ax)-Ax\|}{\|A^*(V(Ax)-Ax)\|}\right)^2
\end{equation}
whenever $V(Ax)\neq Ax$ and $\tau(x):= 1$ otherwise. Note that $\mathcal L_\tau \{V\}(x)$ does not depend on $\|A\|$.
\end{remark}

\begin{theorem}\label{th:LanweberQNE}
If $V$ is a $\rho$-SQNE operator, where $\rho \geq 0$, and $\range A\cap \fix V\neq\emptyset$, then for every extrapolation functional $\sigma$, where $1\leq \sigma \leq \tau $, the operator $\mathcal L_\sigma \{V\}$ is $\rho$-SQNE with $\fix \mathcal L_\sigma \{V\} =  A^{-1}(\fix V)$. Moreover, for all $x\in\mathcal H_1$, we have
\begin{equation}\label{th:LanweberQNE:ineq1}
  2 d (x, \fix \mathcal L_\sigma \{V\}) \cdot \|\mathcal L_\sigma \{V\}(x)-x\|
  \geq \frac{\rho+1}{\|A\|^2} \|V(Ax) - Ax\|^2
\end{equation}
and if, in addition, the set $\mathcal R(A)$ is closed, then
\begin{equation}\label{th:LanweberQNE:ineq2}
  \frac 1 {\|A\|} d(Ax, \mathcal R(A)\cap \fix V)
  \leq d (x, \fix \mathcal L_\sigma\{V\})
  \leq \frac 1 {|A|} d(Ax, \mathcal R(A)\cap \fix V).
\end{equation}
\end{theorem}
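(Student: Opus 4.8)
The plan is to reduce everything to the basic algebraic identity satisfied by the Landweber operator and then quote the structural facts about $\rho$-SQNE operators and the Closed Range Theorem. First I would write $w := V(Ax) - Ax \in \mathcal H_2$, so that $\mathcal L_\sigma\{V\}(x) - x = \frac{\sigma(x)}{\|A\|^2} A^* w$. The key computation is to expand, for $z \in A^{-1}(\fix V)$ (so $Az \in \fix V$ and hence $\|V(Ax) - Az\| \le \|Ax - Az\|$ and, using $\rho$-SQNE, $\|V(Ax) - Az\|^2 \le \|Ax - Az\|^2 - \rho\|w\|^2$), the quantity $\|\mathcal L_\sigma\{V\}(x) - z\|^2$ using $\langle A^* w, x - z\rangle = \langle w, Ax - Az\rangle$. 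Collecting terms one gets
\[
\|\mathcal L_\sigma\{V\}(x) - z\|^2 = \|x-z\|^2 + \frac{2\sigma(x)}{\|A\|^2}\langle w, Ax - Az\rangle + \frac{\sigma(x)^2}{\|A\|^4}\|A^* w\|^2.
\]
Writing $2\langle w, Ax-Az\rangle = \|V(Ax)-Az\|^2 - \|w\|^2 - \|Ax-Az\|^2 \le -(\rho+1)\|w\|^2$ (the last step uses the $\rho$-SQNE inequality for $V$), and bounding $\sigma(x)^2 \|A^*w\|^2 \le \sigma(x)\,\tau(x)\,\|A^*w\|^2 = \sigma(x)\|A\|^2\|w\|^2$ by the definition of $\tau$ in \eqref{rem:tau:eq} together with $\sigma \le \tau$, one obtains
\[
\|\mathcal L_\sigma\{V\}(x) - z\|^2 \le \|x-z\|^2 - \frac{\sigma(x)(\rho+1)}{\|A\|^2}\|w\|^2 + \frac{\sigma(x)}{\|A\|^2}\|w\|^2 = \|x-z\|^2 - \frac{\sigma(x)\rho}{\|A\|^2}\|w\|^2.
\]
Finally, since $\sigma(x)^2\|A^* w\|^2/\|A\|^4 = \|\mathcal L_\sigma\{V\}(x)-x\|^2$ and $\sigma(x)\|A^*w\|^2 \le \|A\|^2\|w\|^2$ gives $\|\mathcal L_\sigma\{V\}(x)-x\|^2 \le \frac{\sigma(x)}{\|A\|^2}\|w\|^2$, hence $\frac{\sigma(x)\rho}{\|A\|^2}\|w\|^2 \ge \rho\|\mathcal L_\sigma\{V\}(x)-x\|^2$, which yields the $\rho$-SQNE inequality $\|\mathcal L_\sigma\{V\}(x)-z\|^2 \le \|x-z\|^2 - \rho\|\mathcal L_\sigma\{V\}(x)-x\|^2$.

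For the fixed point set, the inclusion $A^{-1}(\fix V) \subseteq \fix \mathcal L_\sigma\{V\}$ is immediate since $Ax \in \fix V$ forces $w = 0$. For the reverse inclusion: if $\mathcal L_\sigma\{V\}(x) = x$ then $A^* w = 0$, i.e.\ $w \in \nullset{A^*}$; but $w = V(Ax) - Ax$, and using $\range A \cap \fix V \neq \emptyset$ pick $v$ with $Ay = v \in \fix V$; then the $\rho$-SQNE inequality applied at the point $Ax$ with fixed point $v$, combined with $\langle w, Ax - v\rangle = \langle w, Ax - Ay\rangle = \langle A^* w, x-y\rangle = 0$ (since $w \perp \range A$), forces $\|V(Ax) - v\|^2 \le \|Ax - v\|^2 - \rho\|w\|^2$ and $2\langle w, Ax - v\rangle = 0$ expands the same way to give $\|V(Ax)-v\|^2 - \|Ax-v\|^2 = \|w\|^2 \le -\rho\|w\|^2 \le 0$, whence $w = 0$, i.e.\ $Ax \in \fix V$. (One should also note $\fix \mathcal L_\sigma\{V\}$ is nonempty precisely because $\range A \cap \fix V \ne \emptyset$.) Inequality \eqref{th:LanweberQNE:ineq1} then follows by taking $z$ to be the metric projection of $x$ onto the closed convex set $\fix\mathcal L_\sigma\{V\} = A^{-1}(\fix V)$, rearranging the $\rho$-SQNE estimate into $2d(x,\fix\mathcal L_\sigma\{V\})\|\mathcal L_\sigma\{V\}(x)-x\| \ge \frac{\sigma(x)(\rho+1)}{\|A\|^2}\|w\|^2 \ge \frac{\rho+1}{\|A\|^2}\|V(Ax)-Ax\|^2$ using $\sigma \ge 1$ — this is a standard manipulation of the SQNE defining inequality (take the square and use $\|x-z\|^2 - \|U(x)-z\|^2 \le 2\|x-z\|\,\|U(x)-x\|$ by the cosine/parallelogram bound).

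The last part, the two-sided estimate \eqref{th:LanweberQNE:ineq2} under the assumption that $\range A$ is closed, is where I expect the main work. Here I would use that $\fix \mathcal L_\sigma\{V\} = A^{-1}(\fix V)$ and that $A$ restricted to $\nullset A^\perp$ is a linear isomorphism onto $\range A$ with $|A|\|u\| \le \|Au\| \le \|A\|\|u\|$ for $u \in \nullset A^\perp$ (this is exactly where closedness of $\range A$, i.e.\ $|A| > 0$ via Theorem \ref{th:closedRange}, enters). Given $x$, decompose $x = x_0 + x_1$ with $x_0 \in \nullset A$, $x_1 \in \nullset A^\perp$; observe that $A^{-1}(\fix V) = \nullset A + \big(A|_{\nullset A^\perp}\big)^{-1}(\range A \cap \fix V)$, so the distance $d(x, A^{-1}(\fix V))$ equals the distance of $x_1$ to the set $(A|_{\nullset A^\perp})^{-1}(\range A \cap \fix V)$ within $\nullset A^\perp$; applying the norm equivalence $|A|\|\cdot\| \le \|A(\cdot)\| \le \|A\|\|\cdot\|$ on $\nullset A^\perp$ and the fact that $A$ maps $\range A \cap \fix V$ pre-images to $\range A \cap \fix V$ bijectively, one transfers this to $\frac{1}{\|A\|}d(Ax, \range A \cap \fix V) \le d(x, A^{-1}(\fix V)) \le \frac{1}{|A|}d(Ax, \range A \cap \fix V)$, where one also uses $Ax = Ax_1$ and that $\range A \cap \fix V$ is a closed convex subset of $\range A$. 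The delicate point is checking that taking infima commutes correctly with the linear isomorphism and that $\range A \cap \fix V$ is closed (it is, as an intersection of a closed subspace — closedness of $\range A$ again — with the closed set $\fix V$), so that the metric projections referenced actually exist; once the bookkeeping of the orthogonal splitting is set up cleanly this is routine, and the whole argument can be cited from \cite{CRZ19} if a more compressed treatment is desired.
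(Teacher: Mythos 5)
Your argument is correct, and it takes a genuinely different route from the paper: the paper disposes of the entire theorem by citation — the $\rho$-SQNE property and the identity $\fix\mathcal L_\sigma\{V\}=A^{-1}(\fix V)$ to \cite[Theorem 4.1]{CM16}, inequality \eqref{th:LanweberQNE:ineq1} to \cite[Lemmata 4.4 and 4.6]{CRZ19} in the case $\sigma=1$ followed by the observation $\|\mathcal L_\sigma\{V\}(x)-x\|=\sigma(x)\|\mathcal L\{V\}(x)-x\|\geq\|\mathcal L\{V\}(x)-x\|$, and \eqref{th:LanweberQNE:ineq2} to \cite[Lemma 4.4]{CRZ19} — whereas you reprove everything from scratch. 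Your single expansion of $\|\mathcal L_\sigma\{V\}(x)-z\|^2$, with the two bounds $2\langle w,Ax-Az\rangle\leq-(\rho+1)\|w\|^2$ and $\sigma(x)^2\|A^*w\|^2\leq\sigma(x)\tau(x)\|A^*w\|^2=\sigma(x)\|A\|^2\|w\|^2$, simultaneously yields the SQNE inequality, the reverse fixed-point inclusion (via $A^*w=0\Rightarrow\langle w,Ax-v\rangle=0\Rightarrow(1+\rho)\|w\|^2\leq0$), and \eqref{th:LanweberQNE:ineq1}; and your orthogonal splitting along $\nullset A\oplus\nullset A^{\perp}$, using that $A$ restricted to $\nullset A^{\perp}$ is a bijection onto $\range A$ satisfying $|A|\|u\|\leq\|Au\|\leq\|A\|\|u\|$ (where closedness of $\range A$ enters through Theorem \ref{th:closedRange}), is a correct, self-contained substitute for the cited lemma behind \eqref{th:LanweberQNE:ineq2}. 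What your approach buys is independence from \cite{CM16} and \cite{CRZ19} at the cost of length. One small point of precision in your derivation of \eqref{th:LanweberQNE:ineq1}: the one-sided bound $\|x-z\|^2-\|U(x)-z\|^2\leq2\|x-z\|\,\|U(x)-x\|$ that you quote (with $U:=\mathcal L_\sigma\{V\}$) would leave a spurious additive term $\|U(x)-x\|^2$; what you actually need, and what the exact cosine expansion gives, is the identity
\begin{equation*}
\|x-z\|^2-\|U(x)-z\|^2+\|U(x)-x\|^2=2\langle U(x)-x,\,z-x\rangle\leq 2\|U(x)-x\|\,\|z-x\|,
\end{equation*}
which, combined with your estimate $\|U(x)-z\|^2\leq\|x-z\|^2-\frac{\sigma(x)(\rho+1)}{\|A\|^2}\|w\|^2+\|U(x)-x\|^2$ and the choice $z=P_{\fix U}(x)$, gives precisely $2\,d(x,\fix U)\,\|U(x)-x\|\geq\frac{\sigma(x)(\rho+1)}{\|A\|^2}\|w\|^2\geq\frac{\rho+1}{\|A\|^2}\|w\|^2$; equivalently, multiply $2\langle w,Ax-Az\rangle\leq-(\rho+1)\|w\|^2$ by $-\sigma(x)/\|A\|^2$ and apply the Cauchy--Schwarz inequality. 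With that identity made explicit, the proof is complete.
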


\begin{proof}
See, for example, \cite[Theorem 4.1]{CM16} for the first two statements. Inequality \eqref{th:LanweberQNE:ineq1} follows from \cite[Lemmata 4.4 and 4.6]{CRZ19} in the case $\sigma = 1$. In the general case, where we allow $\sigma(x)\geq 1$, we use the estimate $\|\mathcal L_\sigma \{V\}(x) - x\| = \sigma(x)\|\mathcal L \{V\}(x) - x\|\geq \|\mathcal L \{V\}(x) - x\|$ and the equality $\fix \mathcal L_\sigma \{V\} = \mathcal L \{V\}$. Inequality \eqref{th:LanweberQNE:ineq2} follows from \cite[Lemma 4.4]{CRZ19}.
\end{proof}

For each pair $x,x'\in\mathcal H_1$, let $\mathcal H_1(x,x') := \{z\in\mathcal H_1 \colon \langle x-x', z-x'\rangle \leq 0\}.$ Similarly, for every pair $y,y' \in \mathcal H_2$, define $\mathcal H_2(y,y'):=\{w\in\mathcal H_2 \colon \langle y-y', w-y'\rangle \leq 0\}.$ We have the following lemma.

\begin{lemma}\label{lem:halfspace}
Assume that $V$ is a cutter  and $\range A\cap \fix V\neq\emptyset$. Then for any $u \in \mathcal H_1$, the set $ H := \mathcal H_1(u, \mathcal L_\tau \{V\}(u))$ satisfies
\begin{equation}\label{lem:halfspace1}
  H  = \{z\in\mathcal H_1 \colon \langle Au - V(Au), Az - V(Au)\rangle \leq 0\}
  = A^{-1}\big(\mathcal H_2(Au, V(Au))\big).
\end{equation}
Moreover,
\begin{equation}\label{lem:halfspace1:proj}
P_H(x) = x - \frac{(\langle Au-V(Au), Ax-V(Au)\rangle)_+}{\|A^*(Au-V(Au))\|^2} A^*(Au-V(Au))
\end{equation}
whenever $Au\neq V(Au)$ and $P_H(x)=x$, otherwise.
\end{lemma}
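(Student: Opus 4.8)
\ The plan is a direct computation: unfold the definition of the extrapolated Landweber operator $\mathcal L_\tau\{V\}$ at $u$, substitute it into the definition of the half-space $\mathcal H_1(u,\mathcal L_\tau\{V\}(u))$, and then convert the resulting membership condition into one on $A$ and $V$ by means of the adjoint identity $\langle A^*y, z\rangle = \langle y, Az\rangle$. First I would dispose of the degenerate case $Au = V(Au)$: by Remark~\ref{rem:tau} we then have $\tau(u) = 1$, so $\mathcal L_\tau\{V\}(u) = u + \|A\|^{-2}A^*(V(Au)-Au) = u$ and $H = \mathcal H_1(u,u) = \mathcal H_1$; both sets on the right-hand side of \eqref{lem:halfspace1} are likewise all of $\mathcal H_1$ (the defining inner product vanishes identically, and $\mathcal H_2(Au,Au) = \mathcal H_2$), while $P_H = \id$ agrees with \eqref{lem:halfspace1:proj}, since then $P_H(x)=x$ for every $x$.

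Now assume $Au \neq V(Au)$ and write $d := V(Au) - Au$ and $w := A^*d$. I would first check that $w \neq 0$, which is what makes $\tau(u)$, the point $\mathcal L_\tau\{V\}(u)$ and the denominator in \eqref{lem:halfspace1:proj} well defined and $H$ a proper half-space. Choosing $q = Ap \in \range A \cap \fix V$, the cutter inequality $\langle q - V(Au),\, Au - V(Au)\rangle \leq 0$, rewritten via $q - V(Au) = (q - Au) - d$ and $\langle q - Au, d\rangle = \langle A(p-u), d\rangle = \langle p - u, w\rangle$, becomes $\langle u - p, w\rangle \leq -\|d\|^2 < 0$; hence $w \neq 0$. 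By \eqref{rem:tau:eq} we have $\tau(u)\,\|A\|^{-2} = \|d\|^2/\|w\|^2$, so $\mathcal L_\tau\{V\}(u) = u + (\|d\|^2/\|w\|^2)\,w$, and a short expansion gives $\langle u - \mathcal L_\tau\{V\}(u),\, z - \mathcal L_\tau\{V\}(u)\rangle = -(\|d\|^2/\|w\|^2)(\langle w, z - u\rangle - \|d\|^2)$ for every $z \in \mathcal H_1$. Therefore $z \in H$ if and only if $\langle w, z - u\rangle \geq \|d\|^2$; since $\langle w, z - u\rangle = \langle d, Az - Au\rangle$ and $Au + d = V(Au)$, this is equivalent to $\langle d,\, Az - V(Au)\rangle \geq 0$, i.e. to $\langle Au - V(Au),\, Az - V(Au)\rangle \leq 0$. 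This is the first equality in \eqref{lem:halfspace1}; the second is immediate, because the last condition says exactly that $Az \in \mathcal H_2(Au, V(Au))$. Finally, expanding $\langle Au - V(Au),\, Az - V(Au)\rangle = \langle A^*(Au - V(Au)), z\rangle - \langle Au - V(Au), V(Au)\rangle$ displays $H$ as the half-space $\{z \in \mathcal H_1 : \langle c, z\rangle \leq \gamma\}$ with $c := A^*(Au - V(Au)) \neq 0$ and $\gamma := \langle Au - V(Au), V(Au)\rangle$; then the standard formula for the metric projection onto a half-space, $P_H(x) = x - \|c\|^{-2}(\langle c, x\rangle - \gamma)_+\,c$, together with the identity $\langle c, x\rangle - \gamma = \langle Au - V(Au),\, Ax - V(Au)\rangle$, yields \eqref{lem:halfspace1:proj}.

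I do not expect a serious obstacle: the argument is routine Hilbert-space manipulation. The only step that needs any care — and the only place where the hypotheses that $V$ is a cutter and that $\range A \cap \fix V \neq \emptyset$ are genuinely used — is the verification that $A^*(V(Au) - Au) \neq 0$ whenever $V(Au) \neq Au$, since this is precisely what guarantees that $\tau$, the point $\mathcal L_\tau\{V\}(u)$ and the projection formula \eqref{lem:halfspace1:proj} are meaningful and that $H$ is a genuine half-space rather than all of $\mathcal H_1$.
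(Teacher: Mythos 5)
Your proof is correct and follows essentially the same route as the paper: dispose of the degenerate case $Au=V(Au)$, then expand $\mathcal L_\tau\{V\}(u)$ and push the inner product through $A^*$ to obtain \eqref{lem:halfspace1}, and finally invoke the standard half-space projection formula for \eqref{lem:halfspace1:proj}. Your explicit verification that $A^*(V(Au)-Au)\neq 0$ via the cutter inequality at a point of $\range A\cap\fix V$ is a small, welcome addition; the paper obtains the same nondegeneracy by citing Theorem~\ref{th:LanweberQNE} (which gives $u\neq\mathcal L_\tau\{V\}(u)$).
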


\begin{proof}
Assume that $Au = V(Au)$ for some $u\in \mathcal H_1$. It is easy to see that in this case all the sets in \eqref{lem:halfspace1} are equal to $\mathcal H_1$ and hence $P_H(x)=x$. Assume now that $Au\neq V(Au)$. This implies, by Theorem \ref{th:LanweberQNE},  that $u \neq \mathcal L_\tau\{V\}(u)$. A direct calculation shows that
\begin{align}\label{} \nonumber
  H & = \{z\in\mathcal H_1 \colon
    \langle u - \mathcal L_{\tau}\{ V\}(u), z - \mathcal L_\tau\{V\}(u)\rangle \leq 0\}\\ \nonumber
    & = \left\{z\in\mathcal H_1 \colon
    \left\langle - \frac{\tau(u)}{\|A\|^2} A^*(V(Au)-Au),
    z - u - \frac{\tau(u)}{\|A\|^2} A^*(V(Au)-Au) \right\rangle \leq 0\right\}\\ \nonumber
  & = \{z\in\mathcal H_1 \colon
    - \langle A^*( V(Au)-Au), z - u \rangle
    +  \| V(Au)-Au\|^2 \leq 0\}\\ \nonumber
  & = \{z\in\mathcal H_1 \colon
    \langle Au - V(Au), Az - Au \rangle
    +  \|Au - V(Au)\|^2 \leq 0\}\\ \nonumber
  & = \{z\in\mathcal H_1 \colon
    \langle Au - V(Au), Az - V(Au) \rangle \leq 0\}\\ \nonumber
  & = \{z\in\mathcal H_1 \colon Az \in \mathcal H_2(Au, V(Au)) \}\\
  & = A^{-1}\big(\mathcal H_2(Au, V(Au))\big).
\end{align}
In order to show formula \eqref{lem:halfspace1:proj} it suffices to represent the half-space $H$ as $\{z\in\mathcal H_1 \colon \langle a,z\rangle \leq \beta \}$ with nonzero $a\in \mathcal H_1$ and $\beta \in \mathbb R$ for which $P_H(x) = x - \frac{(\langle a,x \rangle - \beta)_+}{\|a\|^2} a$; see \cite[Chapter 4]{Ceg12}.
\end{proof}

\begin{lemma}
Let $P_q$ be a subgradient projection for a weakly lower semicontinuous function $q\colon\mathcal H_2\to\mathbb R$ with the corresponding subgradients $h(y)\in \partial q(y) $, $y\in \mathcal H_2$, and assume that $q(Az)\leq 0$ for some $z \in \mathcal H_1$. Then for any $u \in \mathcal H_1$, the set $H := \mathcal H_1(u, \mathcal L_\tau \{P_q\}(u))$ satisfies
\begin{equation}\label{lem:halfspace2}
  H = \{z\in \mathcal H_1 \colon q(A u) + \langle A^* h(Au), z - u\rangle \leq 0\}
\end{equation}
whenever $q(Au)>0$ and $H = \mathcal H_1$, otherwise. Consequently,
\begin{equation}\label{lem:halfspace2:proj}
  P_H(x) = x - \frac{(q(Au) + \langle A^* h(Au), x - u\rangle)_+}{\|A^* h(Au)\|^2} A^* h(Au)
\end{equation}
whenever $q(Au)>0$ and $P_H(x)=x$, otherwise.
\end{lemma}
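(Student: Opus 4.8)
The plan is to obtain this lemma as a direct specialization of Lemma \ref{lem:halfspace} to the cutter $V:=P_q$. First I would record that, by Example \ref{ex:subProj}, $P_q$ is indeed a cutter with $\fix P_q=\{y\in\mathcal H_2\colon q(y)\leq 0\}$, and that the standing hypothesis $q(Az)\leq 0$ for some $z\in\mathcal H_1$ says precisely that $Az\in\range A\cap\fix P_q$, so this intersection is nonempty and Lemma \ref{lem:halfspace} applies with $u$ arbitrary. In particular, $H=\mathcal H_1(u,\mathcal L_\tau\{P_q\}(u))$ admits the description \eqref{lem:halfspace1}.

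Next I would split into the two cases in the statement. If $q(Au)\leq 0$, then $P_q(Au)=Au$ by the definition of the subgradient projection, so $V(Au)=Au$ and the degenerate case of Lemma \ref{lem:halfspace} gives $H=\mathcal H_1$ and $P_H(x)=x$. If $q(Au)>0$, then $h(Au)\neq 0$: otherwise the subgradient inequality would make $Au$ a global minimizer of $q$, contradicting $q(Au)>0\geq q(Az)$. Hence $P_q(Au)=Au-\frac{q(Au)}{\|h(Au)\|^2}h(Au)$, so $Au-V(Au)=\frac{q(Au)}{\|h(Au)\|^2}h(Au)$. Substituting this into $H=\{z\colon\langle Au-V(Au),Az-V(Au)\rangle\leq 0\}$ from \eqref{lem:halfspace1}, writing $Az-V(Au)=(Az-Au)+(Au-V(Au))$ and expanding the inner product, the positive scalar $\frac{q(Au)}{\|h(Au)\|^2}$ factors out, and what remains is $\langle h(Au),Az-Au\rangle+q(Au)\leq 0$, i.e. $q(Au)+\langle A^*h(Au),z-u\rangle\leq 0$. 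This is \eqref{lem:halfspace2}.

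For the projection formula I would first check that $a:=A^*h(Au)\neq 0$ when $q(Au)>0$: the subgradient inequality of $q$ at $Au$ evaluated at $Az$ yields $0\geq q(Az)\geq q(Au)+\langle h(Au),Az-Au\rangle=q(Au)+\langle a,z-u\rangle$, so $a=0$ would force $q(Au)\leq 0$, a contradiction. Thus $H=\{z\colon\langle a,z\rangle\leq\beta\}$ with $a=A^*h(Au)\neq 0$ and $\beta:=\langle A^*h(Au),u\rangle-q(Au)$, and the standard half-space projection formula $P_H(x)=x-\frac{(\langle a,x\rangle-\beta)_+}{\|a\|^2}a$ (the same one invoked in the proof of Lemma \ref{lem:halfspace}, cf. \cite[Chapter 4]{Ceg12}) gives \eqref{lem:halfspace2:proj} after noting $\langle a,x\rangle-\beta=q(Au)+\langle A^*h(Au),x-u\rangle$. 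The only mildly delicate points are the two non-degeneracy observations ($h(Au)\neq 0$ and $A^*h(Au)\neq 0$ when $q(Au)>0$); everything else is a short computation feeding on the output of Lemma \ref{lem:halfspace}, so I do not anticipate a genuine obstacle.
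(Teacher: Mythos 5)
Your proposal is correct and follows essentially the same route as the paper: specialize Lemma \ref{lem:halfspace} to $V=P_q$, split on the sign of $q(Au)$, substitute $Au-P_q(Au)=\frac{q(Au)}{\|h(Au)\|^2}h(Au)$ into \eqref{lem:halfspace1}, and conclude with the standard half-space projection formula. The only difference is that you explicitly verify the non-degeneracy facts $h(Au)\neq 0$ and $A^*h(Au)\neq 0$ when $q(Au)>0$, which the paper leaves implicit but which are indeed needed for the denominators in the formulas to make sense.
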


\begin{proof}
Fix a point $u\in \mathcal H_1$. Assume first that $q(Au) \leq 0$. Since $\fix P_q = \{y\in\mathcal H_2 \colon q(y)\leq 0\}$ (see Example \ref{ex:subProj}), we see that $Au=P_q(Au)$. Consequently, $u = \mathcal L_\tau\{V\}(u)$ and thus $H=\mathcal H_1$. Now assume that $q(Au)>0$ in which case $Au\neq P_q(Au)$. Let $h(Au) \in \partial q(Au)$. Then, by \eqref{lem:halfspace1} applied to $V=P_q$, we obtain
\begin{align}\label{} \nonumber
  H & = \{z\in\mathcal H_1 \colon
    \langle Au - P_{q}(Au), Az - P_{q}(Au) \rangle \leq 0\}\\ \nonumber
  & = \left\{z\in\mathcal H_1 \colon
    \left\langle \frac{q(Au)}{\|h(Au)\|^2} h, (Az - Au) + \frac{q(Au)}{\|h(Au)\|^2} h(Au) \right\rangle \leq 0\right\} \\
  & = \{z\in\mathcal H_1 \colon
    \langle A^* h(Au), z - u \rangle + q(Au) \leq 0\}.
\end{align}
Equation \eqref{lem:halfspace2:proj} follows by the same argument as in the proof of Lemma \ref{lem:halfspace}.
\end{proof}

\begin{remark}
  In view of \cite[Theorem 16.47]{BC17}, we get $A^* \partial q (A u) = \partial (q \circ A)(u)$. Consequently, the half-space $H$ defined in \eqref{lem:halfspace2} becomes a sublevel set of the functional $q\circ A$ linearized at $u$.
\end{remark}

\subsection{Regular sets}\label{sec:regularSets}
Let $C_{i}\subseteq \mathcal{H}$, $i\in I$, be closed and convex sets with a nonempty intersection $C$. Following Bauschke \cite[Definition 2.1]{Bauschke1995}, we propose the following definition.
\begin{definition}\label{def:BR}
 We say that the family $\mathcal C:=\{C_{i}\mid i\in I\}$ is \textit{boundedly regular} if for any bounded sequence $\{x_k\}_{k=0}^{\infty }\subseteq \mathcal H$, the following implication holds:
\begin{equation}
\lim_{k\rightarrow\infty}\max_{i\in I}d(x_k,C_i)=0\quad\Longrightarrow\quad \lim_{k\rightarrow\infty}d(x_k,C)=0.
\end{equation}
\end{definition}

\begin{example}
\label{th:bdreg}If at least one of the following conditions is satisfied: (i) $\dim \mathcal{H}<\infty $, (ii) $\interior \bigcap_{i\in I}C_{i}\neq \emptyset$ or (iii) each $C_{i}$ is a half-space, then the family $\mathcal{C}:=\{C_{i}\mid i\in I\}$ is boundedly
regular; see \cite{BauschkeBorwein1996}.
\end{example}

\subsection{Regular Operators}

\begin{definition}\label{def:WRBR}
We say that a quasi-nonexpansive operator $U\colon \mathcal H\rightarrow \mathcal
H$ is \textit{boundedly regular} if for any bounded sequence $\{x_{k}\}_{k=0}^\infty \subseteq \mathcal H$, we have
  \begin{equation} \label{eq:def:BR}
    \lim_{k\to\infty}\| U(x_k)-x_k\| =0\quad\Longrightarrow\quad \lim_{k\to\infty}d(x_k,\fix U)=0.
    \end{equation}
\end{definition}

\begin{notation}
We define $\prod_{j\in J}U_j:=U_{j_m}\ldots U_{j_1}$ to be the product of operators $U_i\colon\mathcal H \to \mathcal H$, $i\in I$, over a nonempty ordered index set $J=(j_1,\ldots,j_m) \subseteq I$.

\end{notation}

\begin{theorem}\label{th:regularTk}
Let $U_i\colon\mathcal H\rightarrow\mathcal H$ be boundedly regular cutters, $i\in I=\{1,\ldots,m\}$ and assume that $\bigcap_{i\in I}\fix U_i\neq\emptyset$. For each  $k=0,1,2,\ldots,$ let $I_k\subseteq I$ be a nonempty (ordered) subset, $|I_k|\leq m$ and let $0<\omega\leq \omega_{i,k}\leq 1$ be such that $\sum_{i\in I_k}\omega_{i,k}=1$. Then for every bounded sequence  $\{x_k\}_{k=0}^\infty\subseteq \mathcal H$, we have
\begin{equation}\label{eq:th:regularTk:result1}
\lim_{k\to\infty}\left\|\sum_{i\in I_k}\omega_{i,k}U_{i}(x_k)-x_k\right\|=0 \quad \Longrightarrow \quad
\lim_{k\to\infty} \max_{i\in I_k}d(x_k,\fix U_i)=0
\end{equation}
and
\begin{equation}\label{eq:th:regularTk:result2}
\lim_{k\to\infty}\left\|\prod_{i\in I_k}U_{i}(x_k)-x_k\right\|=0 \quad \Longrightarrow \quad
\lim_{k\to\infty} \max_{i\in I_k}d(x_k,\fix U_i)=0.
\end{equation}
\end{theorem}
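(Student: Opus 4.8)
The plan is to prove both implications along the same route: first deduce from the hypothesis the (formally stronger) statement that $\max_{i\in I_k}\|U_i(x_k)-x_k\|\to 0$, and then pass to the asserted distance estimate by a contradiction argument that reduces everything to the bounded regularity of one operator at a time. Two facts are used throughout. Since each $U_i$ is a cutter, it is $1$-SQNE (Theorem \ref{th:cuttersAndQNE}), so Theorems \ref{th:SQNE:conv} and \ref{th:SQNE:prod} apply with all $\rho_i = 1$. And, fixing $z_0\in\bigcap_{i\in I}\fix U_i$, the point $z_0$ belongs to the fixed point set of every convex combination $\sum_{i\in I_k}\omega_{i,k}U_i$ and of every product $\prod_{i\in I_k}U_i$, so each distance $d(x_k,\fix(\cdot))$ appearing below is bounded by $\sup_k\|x_k-z_0\|<\infty$ because $\{x_k\}$ is bounded.

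For \eqref{eq:th:regularTk:result1} I would apply the last inequality of Theorem \ref{th:SQNE:conv} to $U:=\sum_{i\in I_k}\omega_{i,k}U_i$, which gives
\[
\omega\sum_{i\in I_k}\|U_i(x_k)-x_k\|^2\;\le\;\sum_{i\in I_k}\omega_{i,k}\|U_i(x_k)-x_k\|^2\;\le\;2\,d(x_k,\fix U)\,\|U(x_k)-x_k\|\;\longrightarrow\;0,
\]
so that $\max_{i\in I_k}\|U_i(x_k)-x_k\|\to 0$. For \eqref{eq:th:regularTk:result2}, write $I_k=(i_1^k,\dots,i_{m_k}^k)$ with $m_k=|I_k|\le m$, and set $Q_l^k:=U_{i_l^k}\cdots U_{i_1^k}$, $Q_0^k:=\id$, so that $\prod_{i\in I_k}U_i=Q_{m_k}^k$. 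The inequality of Theorem \ref{th:SQNE:prod} applied to $U:=Q_{m_k}^k$ yields $\sum_{l=1}^{m_k}\|Q_l^k(x_k)-Q_{l-1}^k(x_k)\|^2\to 0$, hence $\max_{1\le l\le m_k}\|Q_l^k(x_k)-Q_{l-1}^k(x_k)\|\to 0$; combined with $\|Q_l^k(x_k)-x_k\|\le m_k\max_{1\le j\le m_k}\|Q_j^k(x_k)-Q_{j-1}^k(x_k)\|$ this also gives $\max_{0\le l\le m_k}\|Q_l^k(x_k)-x_k\|\to 0$.

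The remaining step --- concluding $\max_{i\in I_k}d(x_k,\fix U_i)\to 0$ --- is the one I expect to require care, because Definition \ref{def:WRBR} supplies bounded regularity of each $U_i$ separately while the quantity to be controlled is a maximum over the varying (and, for products, ordered) index sets $I_k$. I would argue by contradiction: if the conclusion fails, there exist $\varepsilon>0$, a subsequence $\{x_{n_k}\}$, and indices $\iota_k\in I_{n_k}$ with $d(x_{n_k},\fix U_{\iota_k})\ge\varepsilon$; since $I$ is finite, after passing to a further subsequence one may assume $\iota_k\equiv\iota^*$, and in the product case one may also record the position $p_k\in\{1,\dots,m_{n_k}\}$ with $i_{p_k}^{n_k}=\iota_k$ and, $\{1,\dots,m\}$ being finite, assume $p_k\equiv p^*$. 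Put $y_k:=x_{n_k}$ in the averaging case and $y_k:=Q_{p^*-1}^{n_k}(x_{n_k})$ in the product case; by the previous step $\{y_k\}$ is bounded, $\|y_k-x_{n_k}\|\to 0$, and $\|U_{\iota^*}(y_k)-y_k\|\to 0$ --- the last because $U_{\iota^*}(y_k)-y_k=Q_{p^*}^{n_k}(x_{n_k})-Q_{p^*-1}^{n_k}(x_{n_k})$ in the product case. Bounded regularity of $U_{\iota^*}$ then gives $d(y_k,\fix U_{\iota^*})\to 0$, and $d(x_{n_k},\fix U_{\iota^*})\le\|x_{n_k}-y_k\|+d(y_k,\fix U_{\iota^*})\to 0$ contradicts $d(x_{n_k},\fix U_{\iota^*})\ge\varepsilon$. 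This settles both implications.
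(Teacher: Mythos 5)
Your proof is correct. Note that the paper itself offers no in-house argument for this theorem --- it simply cites \cite[Lemma 4.11]{Zal14} and \cite[Lemma 3.5]{RZ16} --- so there is nothing internal to compare against line by line; but your reconstruction is a complete and valid derivation from the tools the paper does provide. The two key moves both check out: (1) since each $U_i$ is a cutter it is $1$-SQNE by Theorem \ref{th:cuttersAndQNE}, so the quantitative inequalities of Theorems \ref{th:SQNE:conv} and \ref{th:SQNE:prod}, together with the uniform bound $d(x_k,\fix(\cdot))\le\sup_k\|x_k-z_0\|$, upgrade the hypothesis to $\max_{i\in I_k}\|U_i(x_k)-x_k\|\to 0$ (resp.\ $\max_l\|Q_l^k(x_k)-Q_{l-1}^k(x_k)\|\to 0$ and $\max_l\|Q_l^k(x_k)-x_k\|\to 0$); and (2) the passage from residual convergence to distance convergence is exactly where care is needed, since bounded regularity is assumed only operator by operator while the index sets $I_k$ vary --- your contradiction argument, extracting a subsequence along which the offending index $\iota^*$ (and, in the product case, its position $p^*$) is constant and then feeding the bounded sequence $y_k$ into the bounded regularity of the single operator $U_{\iota^*}$, closes this gap cleanly. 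This is essentially the argument of the cited lemmas, so I would classify your proposal as correct and in line with the intended proof.
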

\begin{proof}
See, either \cite[Lemma 4.11]{Zal14} or \cite[Lemma 3.5]{RZ16}.
\end{proof}

\section{Main Result}\label{sec:main}

\begin{theorem}\label{th:main}
Let $F\colon\mathcal H_1\rightarrow\mathcal H_1$ be $L$-Lipschitz continuous and $\alpha$-strongly monotone, and let $S\subseteq\mathcal H_1$ be the nonempty solution set of the split convex feasibility problem, that is,
\begin{equation}\label{th:main:S}
S:=\left\{z \in \mathcal H_1 \mid z \in C:=\bigcap_{i\in I}C_i \quad \text{and} \quad Az\in Q:=\bigcap_{i\in J} Q_j\right\},
\end{equation}
where each $C_i \subseteq \mathcal H_1$, $Q_j \subseteq \mathcal H_2$ are closed and convex, $i\in I:=\{1,\ldots,m\}$, $j\in J:=\{1,\ldots,n\}$ and where $A\colon\mathcal H_1\rightarrow\mathcal H_2$ is a bounded linear operator. Moreover, for each $k=0,1,2,\ldots$, let $\mathcal U_k\colon\mathcal H_1 \to \mathcal H_1$ be $\beta_k$-SQNE with $C\subseteq \fix \mathcal U_k$ and $\beta:=\inf \beta_k >0$, and let $\mathcal V_k\colon\mathcal H_2\rightarrow\mathcal H_2$ be $\gamma_k$-SQNE with $Q\subseteq \fix \mathcal V_k$ and $\gamma := \inf_k \gamma_k > 0$. Furthermore, for each $k=0,1,2,\ldots$, let $\sigma_k \colon \mathcal H_1 \to [1,\infty)$ be an extrapolation functional bounded from above by $\tau_k \colon \mathcal H_1 \to [1,\infty)$ defined by
\begin{equation}\label{th:main:tauk}
  \tau_k(x):= \left(\frac{\|A\|\cdot \|\mathcal V_k(Ax)-Ax\|}{\|A^*(\mathcal V_k(Ax)-Ax)\|}\right)^2
\end{equation}
whenever $\mathcal V_k(Ax) \neq Ax$ and $\tau_k(x):=1$, otherwise. Let the sequence $\{u_k\}_{k=0}^\infty$ be defined by the outer approximation method \eqref{int:uk}--\eqref{int:Hk} combined with one of the following algorithmic operators $T_k$:
\begin{enumerate}[(i)]
  \item product operators, where
  \begin{equation} \label{th:main2:Tk:prod}
    T_k (x):= x + \frac{1+\rho_k}{2} \Big(\mathcal U_k (\mathcal L_{\sigma_k} \{\mathcal V_k\}(x))  - x \Big),
  \end{equation}
  \begin{equation} \label{th:main2:rhok:prod}
    0 \leq \rho_k \leq
    \left(\frac{1}{\beta_k} + \frac{1}{\gamma_k}\right)^{-1};
  \end{equation}

  \item simultaneous operators, where $0 < \eta \leq \eta_k \leq 1-\eta$,
  \begin{equation}\label{th:main2:Tk:sim}
    T_k (x):= x + \frac{1+\rho_k}{2}\big(\eta_k \mathcal U_k(x) + (1-\eta_k)\mathcal L_{\sigma_k} \{\mathcal V_k\}(x) - x\big),
  \end{equation}
  \begin{equation}\label{th:main2:rhok:sim}
    0 \leq \rho_k \leq
    \left( \frac{\eta_k}{\beta_k+1} + \frac{1-\eta_k}{\gamma_k+1}\right)^{-1}-1;
  \end{equation}

  \item alternating operators, where
  \begin{equation}\label{th:main2:Tk:alt}
    T_{2k}(x) := x + \frac{1+\beta_k}{2}\big(\mathcal U_k(x) - x\big)
    \qquad \text{and} \qquad
    T_{2k+1}(x) := x + \frac{1+\gamma_k}{2}\big(\mathcal L_{\sigma_k}\{\mathcal V_k\}(x) - x\big).
  \end{equation}
\end{enumerate}
Assume that for all bounded sequences $\{x_k\}_{k=0}^\infty\subseteq \mathcal H_1$ and $\{y_k\}_{k=0}^\infty\subseteq \mathcal H_2$, we have
\begin{equation}\label{th:main:Uk}
\lim_{k\to\infty}\|\mathcal U_k(x_k)-x_k\|=0 \quad \Longrightarrow \quad
\lim_{k\to\infty} \max_{i\in I_k}d(x_k,C_i)=0,
\end{equation}
and
\begin{equation}\label{th:main:Vk}
\lim_{k\to\infty}\|\mathcal V_k(y_k)-y_k\|=0 \quad \Longrightarrow \quad
\lim_{k\to\infty} \max_{j\in J_k}d(y_k,Q_j)=0,
\end{equation}
where $I_k\subseteq I$ and $J_k\subseteq J$ are not empty and $|I_k|\leq m$, $|J_k|\leq n$.
If $\{I_k\}_{k=0}^\infty$ and $\{J_k\}_{k=0}^\infty$ are $s$-intermittent for some $s\geq 1$ (that is, $I=I_k\cup\ldots\cup I_{k+s-1}$, $J=J_k\cup\ldots\cup J_{k+s-1}$ for all $k\geq 0$), $\mathcal R(A)$ is closed, $\{A^{-1}(Q), C_1, \ldots, C_m\}$ and $\{\mathcal R(A), Q_1,\ldots, Q_n\}$ are boundedly regular, and $\lim_{k \to \infty}\lambda_k=0$, then $\lim_{k\to\infty} d(u_k,S)=0$.
If, in addition, $\sum_{k=0}^\infty\lambda_k=\infty$, then the sequence $\{u_k\}_{k=0}^\infty$ converges in norm to the unique solution of VI($F$, $S$).
\end{theorem}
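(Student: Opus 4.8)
The plan is to deduce everything from Theorem \ref{int:th:OAM} by verifying its two standing requirements for each of the three constructions of $T_k$: that $T_k$ is a cutter with $S\subseteq\fix T_k$, and that the regularity implication \eqref{th:OAM:Regularity} holds for a suitable integer (the given $s$ in cases (i)--(ii), and $2s$ in case (iii)). Once this is done, Theorem \ref{int:th:OAM} yields boundedness of $\{u_k\}$, then $d(u_k,S)\to 0$, and finally norm convergence to the unique solution of VI($F$, $S$) under the extra assumption $\sum_k\lambda_k=\infty$.

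First I would record that, since $S\neq\emptyset$, any $z^*\in S$ satisfies $Az^*\in\mathcal R(A)\cap Q$, so $\mathcal R(A)\cap\fix\mathcal V_k\neq\emptyset$ and $\mathcal R(A)\cap\fix P_Q\neq\emptyset$; hence Theorem \ref{th:LanweberQNE} applies and $\mathcal L_{\sigma_k}\{\mathcal V_k\}$ is $\gamma_k$-SQNE with $\fix\mathcal L_{\sigma_k}\{\mathcal V_k\}=A^{-1}(\fix\mathcal V_k)\supseteq A^{-1}(Q)$. In case (i), Theorem \ref{th:SQNE:prod} shows that $U:=\mathcal U_k\,\mathcal L_{\sigma_k}\{\mathcal V_k\}$ is $\rho$-SQNE with $\rho=(1/\beta_k+1/\gamma_k)^{-1}$ and $\fix U=\fix\mathcal U_k\cap A^{-1}(\fix\mathcal V_k)\supseteq S$; since $0\le\rho_k\le\rho$, the operator $T_k$ of \eqref{th:main2:Tk:prod} is the $\frac{1+\rho_k}{2}$-relaxation of the ($\rho_k$-)SQNE operator $U$, so by Theorem \ref{th:cuttersAndQNE} it is a cutter with $\fix T_k=\fix U\supseteq S$. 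Case (ii) is identical, using Theorem \ref{th:SQNE:conv} with weights $\eta_k,1-\eta_k$, and in case (iii) the cutter property of $T_{2k}$ and $T_{2k+1}$ (with $S$ in their fixed point sets) follows directly from Theorem \ref{th:cuttersAndQNE} applied to the $\beta_k$-SQNE operator $\mathcal U_k$ and the $\gamma_k$-SQNE operator $\mathcal L_{\sigma_k}\{\mathcal V_k\}$.

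The core of the argument is \eqref{th:OAM:Regularity}. Boundedness of $\{u_k\}$ makes $\{F(u_k)\}$ and $\{Au_k\}$ bounded. Suppose the left-hand side of \eqref{th:OAM:Regularity} holds along a subsequence $\{n_k\}$, so that $\|T_j(u_j)-u_j\|\to 0$ for every $j$ in the window $W_k:=\{n_k-s'+1,\dots,n_k\}$, where $s'=s$ or $s'=2s$. For each such $j$ I would extract two consequences. \emph{Asymptotic regularity}: since $T_j(u_j)\in H_j$ we have $d(u_j,H_j)\le\|u_j-T_j(u_j)\|$, hence $d(u_j-\lambda_jF(u_j),H_j)\le\|u_j-T_j(u_j)\|+\lambda_j\|F(u_j)\|\to 0$, and therefore $\|u_{j+1}-u_j\|\le\alpha_j\,d(u_j-\lambda_jF(u_j),H_j)+\lambda_j\|F(u_j)\|\to 0$; summing the at most $s'$ consecutive increments between $u_j$ and $u_{n_k}$ gives $\|u_j-u_{n_k}\|\to 0$, and likewise $\|Au_j-Au_{n_k}\|\to 0$. \emph{Decoupling}: writing $T_j(u_j)-u_j$ as a scalar multiple of $U(u_j)-u_j$ and invoking the ``moreover'' estimate of Theorem \ref{th:SQNE:prod} (resp.\ Theorem \ref{th:SQNE:conv}) together with $d(u_j,\fix T_j)\le d(u_j,S)\le\sup_k d(u_k,S)<\infty$, one obtains $\|\mathcal L_{\sigma_j}\{\mathcal V_j\}(u_j)-u_j\|\to 0$ and $\|\mathcal U_j(z_j)-z_j\|\to 0$ with $z_j:=\mathcal L_{\sigma_j}\{\mathcal V_j\}(u_j)$ (in case (iii) this is immediate, the two quantities being multiples of $\|\mathcal U_j(u_j)-u_j\|$ and $\|\mathcal L_{\sigma_j}\{\mathcal V_j\}(u_j)-u_j\|$). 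Since $\mathcal L_{\sigma_j}\{\mathcal V_j\}$ is quasi-nonexpansive with $z^*$ a fixed point, $\{z_j\}$ is bounded and $\|z_j-u_j\|\to 0$; then assumption \eqref{th:main:Uk} gives $\max_{i\in I_j}d(u_j,C_i)\to 0$, while inequality \eqref{th:LanweberQNE:ineq1} together with $d(u_j,A^{-1}(\fix\mathcal V_j))\le d(u_j,S)$ forces $\|\mathcal V_j(Au_j)-Au_j\|\to 0$, so assumption \eqref{th:main:Vk} gives $\max_{j'\in J_j}d(Au_j,Q_{j'})\to 0$. (Applying \eqref{th:main:Uk}--\eqref{th:main:Vk} along a subsequence is legitimate by the standard device of completing it to a full sequence using fixed points of the operators in question.)

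Combining the two consequences, for every $j\in W_k$ one gets $\max_{i\in I_j}d(u_{n_k},C_i)\to 0$ and $\max_{j'\in J_j}d(Au_{n_k},Q_{j'})\to 0$. Now $s$-intermittency of $\{I_k\}$ and $\{J_k\}$ ensures that $\{I_j:j\in W_k\}$ and $\{J_j:j\in W_k\}$ cover $I$ and $J$ respectively (in case (iii), applied separately to the $s$ even and the $s$ odd members of the length-$2s$ window), whence $\max_{i\in I}d(u_{n_k},C_i)\to 0$ and $\max_{j'\in J}d(Au_{n_k},Q_{j'})\to 0$. Since $Au_{n_k}\in\mathcal R(A)$, bounded regularity of $\{\mathcal R(A),Q_1,\dots,Q_n\}$ gives $d(Au_{n_k},\mathcal R(A)\cap Q)\to 0$, and then \eqref{th:LanweberQNE:ineq2} applied with $V=P_Q$ — here $\mathcal R(A)$ is closed, so $|A|>0$ by Theorem \ref{th:closedRange} — transfers this to $d(u_{n_k},A^{-1}(Q))\to 0$. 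Together with $\max_{i\in I}d(u_{n_k},C_i)\to 0$ and bounded regularity of $\{A^{-1}(Q),C_1,\dots,C_m\}$, this yields $d(u_{n_k},S)=d(u_{n_k},C\cap A^{-1}(Q))\to 0$, which is exactly \eqref{th:OAM:Regularity}; Theorem \ref{int:th:OAM} then finishes the proof. I expect the decoupling step — isolating the displacements of $\mathcal U_j$ and of $\mathcal V_j\circ A$ from the single scalar $\|T_j(u_j)-u_j\|$ — together with the index bookkeeping needed to transfer window-local information to $u_{n_k}$ via intermittency, to be the main obstacle; everything else is a chain of applications of the preliminary results.
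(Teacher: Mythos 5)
Your proposal is correct and follows essentially the same route as the paper's proof: establish the cutter property of $T_k$ via Theorems \ref{th:cuttersAndQNE}, \ref{th:SQNE:conv}, \ref{th:SQNE:prod} and \ref{th:LanweberQNE}, verify the regularity implication \eqref{th:OAM:Regularity} by decoupling $\|T_k(u_k)-u_k\|$ into the displacements of $\mathcal U_k$ and $\mathcal V_k\circ A$ using the ``moreover'' estimates, transfer the resulting distance estimates to $u_{n_k}$ via intermittency and asymptotic regularity, and conclude with bounded regularity together with \eqref{th:LanweberQNE:ineq2} before invoking Theorem \ref{int:th:OAM}. The only cosmetic differences are that you derive the asymptotic regularity step directly rather than citing the lemma the paper uses, and you take window length $s$ rather than $2s$ in cases (i)--(ii).
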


\begin{proof}
Observe that the operators $T_k$ defined either in (i), (ii) or (iii) are cutters such that $S\subseteq \fix T_k$. This follows from Theorems \ref{th:cuttersAndQNE}, \ref{th:SQNE:conv}, \ref{th:SQNE:prod} and \ref{th:LanweberQNE}. Therefore it is reasonable to consider the outer approximation method paired with the $T_k$'s.

In order to complete the proof, in view of Theorem \ref{int:th:OAM}, it suffices to show that for any subsequence $\{n_k\}_{k=0}^\infty \subseteq\{k\}_{k=0}^\infty$, we have
\begin{equation} \label{pr:main:toShow}
\lim_{k \to \infty}\sum_{l=0}^{2s-1}\| T_{n_k-l} (u_{n_k-l}) - u_{n_k-l}\| = 0
\quad\Longrightarrow\quad
\lim_{k \to \infty} d(u_{n_k},S)=0.
\end{equation}
To this end, assume that
\begin{equation} \label{pr:main:assumption}
\lim_{k \to \infty}\sum_{l=0}^{2s-1}\| T_{n_k-l} (u_{n_k-l}) - u_{n_k-l}\| = 0
\end{equation}
for some $\{n_k\}_{k=0}^\infty \subseteq\{k\}_{k=0}^\infty$. We divide the rest of the proof into several steps.

\textbf{Step 1.} By Theorem \ref{int:th:OAM}, the sequence $\{u_k\}_{k=0}^\infty$ is bounded and hence $d(u_k, S)\leq R$ for some $R>0$. Moreover, by \cite[Lemma 3.2]{GRZ17}, for any subsequence $\{m_k\}_{k=0}^\infty \subseteq\{k\}_{k=0}^\infty$, we have
\begin{equation} \label{pr:main:equivalence}
  \lim_{k\to\infty}(T_{m_k} (u_{m_k}) - u_{m_k})=0 \quad \Longleftrightarrow \quad  \lim_{k\to\infty} (u_{m_k+1}-u_{m_k})=0.
\end{equation}
Consequently, by setting $m_k:=n_k-l$ and by \eqref{pr:main:assumption}, for each $l=1,2,\ldots, 2s-1$, we obtain
\begin{equation}\label{pr:main:increase}
\lim_{k\to\infty}\|u_{n_k} - u_{n_k - l}\| = 0.
\end{equation}

\textbf{Step 2.} Observe that property \eqref{th:main:Uk}, which is solely related to the sequence of operators paired with the sequence of index sets, is hereditary with respect to any of their subsequences. To be more precise, for all bounded sequences $\{x_k\}_{k=0}^\infty\subseteq\mathcal H_1$ and for any subsequence $\{m_k\}_{k=0}^\infty \subseteq\{k\}_{k=0}^\infty$, we have
\begin{equation}\label{pr:main:Umk_xk}
  \lim_{k\to\infty}\| \mathcal U_{m_k}(x_k)-x_k\|=0 \quad \Longrightarrow \quad
  \lim_{k\to\infty} \max_{i\in I_{m_k}}d(x_k,C_i)=0.
\end{equation}
Indeed, take any $z\in S$ and define $x_m':= x_{m_k}$ whenever $m = m_k$ and otherwise set $x_m' := z$. It is not difficult to see that the augmented sequence $\{x_m'\}_{n=0}^\infty$ is bounded and satisfies \eqref{th:main:Uk} which in turn implies \eqref{pr:main:Umk_xk}.

By applying a similar argument to property \eqref{th:main:Vk}, we obtain that for all bounded sequences $\{y_k\}_{k=0}^\infty\subseteq\mathcal H_2$ and for any subsequence $\{m_k\}_{k=0}^\infty \subseteq\{k\}_{k=0}^\infty$,
\begin{equation}\label{pr:main:Vmk_yk}
  \lim_{k\to\infty}\| \mathcal V_{m_k}(y_k)-y_k\|=0 \quad \Longrightarrow \quad
  \lim_{k\to\infty} \max_{j\in J_{m_k}}d(y_k, Q_j)=0.
\end{equation}

\textbf{Step 3.} We show that in all three cases (i)--(iii), we have
\begin{equation} \label{pr:main:maxDistIJ}
\lim_{k \to \infty} \max_{i\in I} d(u_{n_k}, C_i) = 0 \qquad \text{and} \qquad
\lim_{k \to \infty} \max_{j\in J} d(A u_{n_k}, Q_j) = 0.
\end{equation}
To this end, let $i_k:= \argmax_{i\in I} d(u_{n_k}, C_i)$ and let $j_k:=\argmax_{j\in J} d(Au_{n_k},Q_j)$.

\textbf{Case (i)}. By Theorems \ref{th:SQNE:prod} and \ref{th:LanweberQNE}, for each $l=0,1,2,\ldots,2s-1$, we have
\begin{align}\label{pr:main:estimate:prod}  \nonumber
  \|T_{n_k-l}(u_{n_k-l}) - u_{n_k-l}\|
  & \geq  \frac {1}{2} \| \mathcal U_{n_k-l} (\mathcal L_{\sigma_{n_k-l}}\{\mathcal V_{n_k-l}\} (u_{n_k-l})) - u_{n_k-l}\|\\ \nonumber
  & \geq \frac {\beta} {4R} \| \mathcal U_{n_k-l} (\mathcal L_{\sigma_{n_k-l}}\{\mathcal V_{n_k-l}\}(u_{n_k-l})) - \mathcal L_{\sigma_{n_k-l}}\{\mathcal V_{n_k-l}\}(u_{n_k-l}) \|^2 \\ \nonumber
  & \quad + \frac {\gamma} {4R}\| \mathcal L_{\sigma_{n_k-l}}\{\mathcal V_{n_k-l}\}(u_{n_k-l}) - u_{n_k-l} \|^2 \\ \nonumber
  & \geq \frac {\beta} {4R} \| \mathcal U_{n_k-l} (\mathcal L_{\sigma_{n_k-l}}\{\mathcal V_{n_k-l}\}(u_{n_k-l})) - \mathcal L_{\sigma_{n_k-l}}\{\mathcal V_{n_k-l}\}(u_{n_k-l}) \|^2 \\
  & \quad + \frac {\gamma} {16R^3 \|A\|^4} \cdot \| \mathcal V_{n_k-l} (Au_{n_k-l}) - Au_{n_k-l} \|^4.
\end{align}

For each $k\geq 2s-1$, let $l_k$ be the smallest $l\in \{0,\ldots,2s-1\}$ such that $i_k \in I_{n_k-l}$. Since the control sequence $\{I_k\}_{k=0}^\infty$ is $s$-intermittent, such an $l_k$ exists. By \eqref{pr:main:assumption}, \eqref{pr:main:estimate:prod} and \eqref{pr:main:Umk_xk} applied to $m_k:=n_k-l_k$ and $x_k:=\mathcal L_{\sigma_{n_k-l_k}} \{\mathcal V_{n_k-l_k}\}(u_{n_k-l_k})$, we obtain
\begin{equation}\label{pr:main:maxDistI:xk}
  \lim_{k\to\infty} \max_{i\in I_{n_k-l_k}} d(x_k, C_i) = 0.
\end{equation}
Moreover, by \eqref{pr:main:assumption} and \eqref{pr:main:estimate:prod}, we have
\begin{equation}\label{}
  \lim_{k\to\infty} \|x_k - u_{n_k-l_k}\| = 0
\end{equation}
and consequently,
\begin{align} \label{pr:main:maxDistI}\nonumber
\max_{i\in I}d(u_{n_k}, C_i)
& = \|P_{C_{i_k}}(u_{n_k}) - u_{n_k}\|
\leq \|P_{C_{i_k}}(x_k) - u_{n_k}\| \\ \nonumber
& \leq \|P_{C_{i_k}}(x_k) - x_k\| + \|x_k - u_{n_k-l_k}\| + \|u_{n_k-l_k} - u_{n_k}\|\\
& \leq \max_{i\in I_{n_k-l_k}}d(x_k, C_i) + \|x_k - u_{n_k-l_k}\| + \|u_{n_k-l_k} - u_{n_k}\| \to 0
\end{align}
as $k \to \infty$, which proves the first part of \eqref{pr:main:maxDistIJ}.

Similarly, for each $k\geq 2s-1$, let $r_k$ be the smallest $r \in \{0,\ldots,2s-1\}$ such that $j_k \in I_{n_k-r}$. By \eqref{pr:main:assumption}, \eqref{pr:main:estimate:prod} and \eqref{pr:main:Vmk_yk} applied to $m_k:=n_k-r_k$ and $y_k := Au_{n_k-r_k}$, we obtain
\begin{equation}\label{pr:main:maxDistI:yk}
  \lim_{k\to\infty} \max_{j\in J_{n_k-r_k}} d(y_k, Q_j) = 0.
\end{equation}
By the definition of the metric projection and by the triangle inequality, we have
\begin{align} \label{pr:main:maxDistJ}\nonumber
\max_{j\in J}d(Au_{n_k}, Q_j)
& = \|P_{Q_{j_k}}(A u_{n_k}) - A u_{n_k}\|
\leq \|P_{Q_{j_k}}(y_k) - A u_{n_k}\| \\ \nonumber
& \leq  \|P_{Q_{j_k}}(y_k) - y_k\| +  \|y_k - A u_{n_k}\|\\
& \leq \max_{j\in J_{n_k-r_k}}d(y_k, Q_j) + \|A\| \cdot \|u_{n_k} - u_{n_k-r_k}\| \to 0
\end{align}
as $k\to \infty$. This proves the second part of \eqref{pr:main:maxDistIJ}.

\textbf{Case (ii).} By Theorems \ref{th:SQNE:conv} and \ref{th:LanweberQNE}, for each $l=0,1,2,\ldots,2s-1$, we have
\begin{align}\label{pr:main:estimate:sim}  \nonumber
  \|T_{n_k-l}(u_{n_k-l}) - u_{n_k-l}\|
  & \geq  \frac {1}{2} \| \eta_{n_k-l} \mathcal U_{n_k-l} (u_{n_k-l}) + (1-\eta_{n_k-l})\mathcal L_{\sigma_{n_k-l}}\{\mathcal V_{n_k-l}\} (u_{n_k-l})\|\\ \nonumber
  & \geq \frac {\eta \beta} {4R} \| \mathcal U_{n_k-l} (u_{n_k-l}) - u_{n_k-l}) \|^2  \\ \nonumber
  & \quad + \frac {\eta\gamma} {4R}\| \mathcal L_{\sigma_{n_k-l}}\{V_{n_k-l}\}(u_{n_k-l}) - u_{n_k-l} \|^2 \\ \nonumber
  & \geq \frac {\eta \beta} {4R} \| \mathcal U_{n_k-l} (u_{n_k-l}) - u_{n_k-l}) \|^2 \\
  &  \quad + \frac {\eta\gamma} {16R^3 \|A\|^4} \cdot \frac{}{}\| \mathcal V_{n_k-l} ( A (u_{n_k-l})) - A (u_{n_k-l}) \|^4.
\end{align}
Similarly to Case (i), we can apply \eqref{pr:main:Umk_xk} to $m_k:=n_k-l_k$ and $x_k:=u_{n_k-l_k}$ in order to obtain \eqref{pr:main:maxDistI:xk} and \eqref{pr:main:maxDistI}. Moreover, by applying \eqref{pr:main:Vmk_yk} to $m_k:=n_k-r_k$ and $y_k := Au_{n_k-r_k}$, we obtain \eqref{pr:main:maxDistI:yk} and \eqref{pr:main:maxDistJ}.

\textbf{Case (iii).} We split the sequence $\{n_k\}_{k=0}^\infty$ into two disjoint subsequences consisting of all odd and all even integers, respectively. To this end, consider the quotients $q_k:=\lfloor n_k/2\rfloor$, and define the sets $K_1:=\{k \colon n_k = 2q_k +1\}$ and $K_2:=\{k \colon n_k = 2q_k \}$. Without any loss of generality, we may assume that both $K_1$ and $K_2$ are infinite. Otherwise the argument simplifies to only one of them.

Assume for now that $k\in K_1$. By using the equality $n_k-2l-1 = 2(q_k-l)$ and by the definition of $T_k$, we get
\begin{equation}\label{pr:main:estimate:np1}
  \|T_{n_k-2l-1}(u_{n_k-2l-1}) - u_{n_k-2l-1}\|
  \geq \frac{1}{2}\|\mathcal U_{q_k-l}(u_{n_k-2l-1}) - u_{n_k-2l-1}\|.
\end{equation}
Similarly, using the equality $n_k-2l = 2(q_k-l)+1$, the definition of $T_k$ and \eqref{th:LanweberQNE:ineq1}, we obtain
\begin{align}\label{pr:main:estimate:np2} \nonumber
  \|T_{n_k-2l}(u_{n_k-2l}) - u_{n_k-2l}\|
  & \geq \frac{1}{2}\|\mathcal L_{\sigma_{q_k-l}} \{\mathcal V_{q_k-l}\}(u_{n_k-2l}) - u_{n_k-2l}\|\\
  & \geq \frac{1}{4R\|A\|^2}\|\mathcal V_{q_k-l}(Au_{n_k-2l})-Au_{n_k-2l}\|^2.
\end{align}
Since both controls $\{I_k\}_{k=0}^\infty$ and $\{J_k\}_{k=0}^\infty$ are $s$-intermittent, for each $k\in K_1$, there are $l_k, r_k\in\{0,\ldots,s-1\}$ such that $i_k \in I_{q_k-l_k}$ and $j_k \in J_{q_k-r_k}$.
By \eqref{pr:main:assumption}, \eqref{pr:main:estimate:np1} and \eqref{pr:main:Umk_xk} applied to $m_k:=q_k-l_k$ and $x_k:= u_{n_k-2l_k-1}$, we obtain
\begin{equation}\label{pr:main:maxDistI:xp}
  \lim_{\substack{k\to\infty \\ k\in K_1}} \ \max_{i\in I_{q_k-l_k}} d(x_k, C_i) = 0.
\end{equation}
Moreover (compare with \eqref{pr:main:maxDistI}), we have
\begin{align} \label{pr:main:maxDistI:K1}
\max_{i\in I}d(u_{n_k}, C_i)
& \leq \max_{i\in I_{q_k-l_k}}d(x_k, C_i) + \|u_{n_k} - u_{n_k-2l_k-1}\| \to 0
\end{align}
as $k \to \infty$, $k\in K_1$. On the other hand, by \eqref{pr:main:assumption}, \eqref{pr:main:estimate:np2} and \eqref{pr:main:Vmk_yk} applied to $m_k:=q_k-r_k$ and $y_k:= Au_{n_k-2r_k}$, we obtain
\begin{equation}\label{}
  \lim_{\substack{k\to\infty\\ k\in K_1}} \ \max_{j\in J_{q_k-r_k}} d(y_k, Q_j) = 0.
\end{equation}
Moreover (compare with \eqref{pr:main:maxDistJ}), we have
\begin{align} \label{pr:main:maxDistJ:K1}
\max_{j\in J}d(Au_{n_k}, Q_j)
& \leq \max_{j\in J_{q_k-r_k}}d(y_k, Q_j) + \|A\| \cdot \|u_{n_k} - u_{n_k-2r_k}\| \to 0
\end{align}
as $k \to \infty$, $k\in K_1$. This proves \eqref{pr:main:maxDistIJ} when $k\in K_1$.

A very similar argument can be used to show that
\begin{equation} \label{}
\lim_{\substack{k \to \infty\\ k\in K_2}} \max_{i\in I} d(u_{n_k}, C_i) = 0 \qquad \text{and} \qquad
\lim_{\substack{k \to \infty\\ k\in K_2}} \max_{j\in J} d(A u_{n_k}, Q_j) = 0.
\end{equation}
This, when combined with \eqref{pr:main:maxDistI:K1} and \eqref{pr:main:maxDistJ:K1}, completes the proof of Case (iii).

\textbf{Step 4.} We show that in all three cases, we have $d(u_{n_k},S)\to 0$. Indeed, by Theorem \ref{th:LanweberQNE} (with $V=P_Q$), we have
\begin{equation}\label{}
  d(u_{n_k},A^{-1}(Q)) \leq \frac 1 {|A|} d(Au_{n_k},\mathcal R(A)\cap Q).
\end{equation}
Since $Au_{n_k}\in \mathcal R(A)$, by the second part in \eqref{pr:main:maxDistIJ} and, by the assumed bounded regularity of the family $\{\mathcal R(A),Q_1,\ldots,Q_n\}$, we obtain $d(Au_{n_k},\mathcal R(A)\cap Q)\to 0$ and thus $d(u_{n_k},A^{-1}(Q)) \to 0$. This, when combined with the first part of \eqref{pr:main:maxDistIJ} and the assumed bounded regularity of the family $\{A^{-1}(Q), C_1,\ldots,C_m\}$, lead to $d(u_{n_k},S)\to 0$, which completes the proof.
\end{proof}

\begin{remark}[$CQ$-methods]
  Assume that $\mathcal U_k := P_Q$ and $\mathcal V_k := P_Q$ for each $k=0,1,2,\ldots$. Then, within the framework of Theorem \ref{th:main}, the half-spaces $H_k$ are obtained by using the algorithmic operators $T_k$ corresponding to the (extrapolated) $CQ$-method in case (i), the simultaneous(-extrapolated) $CQ$-method in case (ii) and the alternating(-extrapolated) $CQ$-method in case (iii).
\end{remark}

We now present several examples of sequences $\{\mathcal U_k\}_{k=0}^\infty$ and $\{\mathcal V_k\}_{k=0}^\infty$ all of which satisfy conditions \eqref{th:main:Uk} and \eqref{th:main:Vk}, respectively. For this reason, assume that for each $i\in I$ and $j\in J$, we have
\begin{equation}
C_i = \fix U_i \qquad \text{and} \qquad Q_j=\fix V_j,
\end{equation}
where $U_i\colon \mathcal H_1 \to \mathcal H_1$ and $V_j \colon \mathcal H_2 \to \mathcal H_2$ are boundedly regular cutters.

\begin{remark}
Within the above setting, one can use:
\begin{itemize}
  \item Metric projections $U_i = P_{C_i}$ and $V_j=P_{Q_j}$.
  \item Subgradient projections $U_i=P_{c_i}$ and $V_j=P_{q_j}$, when $C_i=\{x\in \mathbb R^{d_1} \colon c_i(x)\leq 0\}$ and $Q_j=\{y\in \mathbb R^{d_2} \colon q_j(y)\leq 0\}$ for some convex functions $c_i\colon \mathbb R^{d_1} \to \mathbb R$ and $q_j\colon \mathbb R^{d_2} \to \mathbb R$.
  \item Proximal operators $U_i = \prox_{c_i}$ and $V_j = \prox_{q_j}$, when $C_i = \Argmin_{x} c_i(x)$ and $Q_j = \Argmin_y q_j(y)$ for $c_i$ and $q_j$ as above.
  \item Any firmly nonexpansive mappings $U_i\colon \mathbb R^{d_1} \to \mathbb R^{d_1}$ and $V_j\colon \mathbb R^{d_2} \to \mathbb R^{d_2}$.
\end{itemize}
\end{remark}

\begin{remark}
  Bounded regularity of the families $\{A^{-1}(Q), C_1, \ldots, C_m\}$ and $\{\mathcal R(A), Q_1,\ldots, Q_n\}$ holds when, for example, $\mathcal H_1=\mathbb R^{d_1}$, $\mathcal H_2 = \mathbb R^{d_2}$.
\end{remark}

\begin{example}\label{ex:UkVk}
In view of Theorem \ref{th:regularTk}, the operators $T_k$ (and thus the half-spaces $H_k$) presented in Theorem \ref{th:main} (cases (i), (ii) and (iii)) can be obtained by using:
\begin{enumerate}[(a)]
\item Sequential cutters, where
\begin{equation}
\mathcal U_k := U_{i_k} \qquad \text{and} \qquad \mathcal V_k := V_{j_k},
\end{equation}
and $\{i_k\}_{k=0}^\infty \subseteq I$ and $\{j_k\}_{k=0}^\infty \subseteq J$ are two $s$-almost cyclic control sequences, that is, $I=\{i_k,\ldots,i_{k+s-1}\}$ and $J=\{j_k,\ldots,j_{k+s-1}\}$ for all $k\geq 0$.

\item Simultaneous cutters, where
  \begin{equation}\label{}
    \mathcal U_k := \sum_{i\in I_k} \omega_{i,k} U_i \qquad \text{and} \qquad
    \mathcal V_k := \sum_{j\in J_k} \omega_{j,k}' V_j,
  \end{equation}
  $\{I_k\}_{k=0}^\infty \subseteq I$ and $\{J_k\}_{k=0}^\infty \subseteq J$ are $s$-intermittent control sequences, and where $\omega_{i,k}, \omega_{j,k}'\geq \omega>0$ are such that $\sum_{i\in I_k} \omega_{i,k} = \sum_{j\in J_k} \omega_{j,k}' = 1$.

\item Products of cutters, where
  \begin{equation}\label{}
    \mathcal U_k := \prod_{i\in I_k} U_i \qquad \text{and} \qquad
    \mathcal V_k := \prod_{j\in J_k} V_j,
  \end{equation}
  and $\{I_k\}_{k=0}^\infty \subseteq I$ and $\{J_k\}_{k=0}^\infty \subseteq J$ are as above.
\end{enumerate}
\end{example}

\begin{remark}
Observe that in the case of alternating operators (case (iii)) with the extrapolation functional $\sigma_k = \tau_k$, in view of Lemma \ref{lem:halfspace}, the half-space $H_{2k+1}$ and the associated projection $P_{H_{2k+1}}$ have equivalent forms, that is,
\begin{equation}
H_{2k+1} =
\{z \in \mathcal H_1 \colon \langle Au_{2k+1} - \mathcal V_{k}(Au_{2k+1}), Az - \mathcal V_{k}(Au_{2k+1})\rangle \leq 0\}
\end{equation}
and
\begin{equation}\label{rem:halfspace1:proj}
    P_{H_{2k+1}}(x) = x - \frac{(\langle Au_{2k+1} - \mathcal V_k(Au_{2k+1}), Ax - \mathcal V_k(Au_{2k+1})\rangle)_+}{\|A^*(Au_{2k+1}-\mathcal V_k(Au_{2k+1}))\|^2} A^*(Au_{2k+1}-\mathcal V_k(Au_{2k+1}))
  \end{equation}
whenever $Au_{2k+1} \neq \mathcal V_k(Au_{2k+1})$ and otherwise $H_{2k+1}=\mathcal H_1$, in which case $P_{H_{2k+1}}(x) = x$.
\end{remark}

By slightly adjusting the proof of Theorem \ref{th:main}, one can also obtain the following result.

\begin{theorem}\label{th:main2}
Let $F\colon\mathcal H\rightarrow\mathcal H$ be $L$-Lipschitz continuous and $\alpha$-strongly monotone, and let $S\subseteq\mathcal H$ be the nonempty solution set of a convex feasibility problem, that is, $S:=\bigcap_{i\in I}C_i$, where $C_i \subseteq \mathcal H$ are closed and convex, $i\in I:=\{1,\ldots,m\}$. Moreover, for each $k=0,1,2,\ldots$, let $T_k\colon\mathcal H \to \mathcal H$ be a cutter. Let the sequence $\{u_k\}_{k=0}^\infty$ be defined by the outer approximation method \eqref{int:uk}--\eqref{int:Hk}. Assume that for all bounded sequences $\{x_k\}_{k=0}^\infty\subseteq \mathcal H$, we have
\begin{equation}\label{th:main2:Tk}
\lim_{k\to\infty}\|T_k(x_k)-x_k\|=0 \quad \Longrightarrow \quad
\lim_{k\to\infty} \max_{i\in I_k}d(x_k,C_i)=0,
\end{equation}
where $I_k\subseteq I$ are not empty and $|I_k|\leq m$. If $\{I_k\}_{k=0}^\infty$ is $s$-intermittent, $\{C_1, \ldots, C_m\}$ is boundedly regular and $\lim_{k \to \infty}\lambda_k=0$, then $\lim_{k\to\infty} d(u_k,S)=0$. If, in addition, $\sum_{k=0}^\infty\lambda_k=\infty$, then the sequence $\{u_k\}_{k=0}^\infty$ converges in norm to the unique solution of VI($F$, $S$).
\end{theorem}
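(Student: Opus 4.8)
The plan is to reduce Theorem \ref{th:main2} to Theorem \ref{int:th:OAM} exactly as in the proof of Theorem \ref{th:main}, but with a simpler bookkeeping since there is no split structure and no Landweber transform to manage. In view of Theorem \ref{int:th:OAM}, it suffices to verify the regularity implication \eqref{th:OAM:Regularity} for the integer $s$ coming from the $s$-intermittency of $\{I_k\}_{k=0}^\infty$; once that is established, the boundedness of $\{u_k\}_{k=0}^\infty$, the conclusion $d(u_k,S)\to 0$, and the norm convergence to the unique solution of VI($F$, $S$) under the additional hypothesis $\sum_k\lambda_k=\infty$ all follow immediately from Theorem \ref{int:th:OAM}.

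First I would fix an arbitrary subsequence $\{n_k\}_{k=0}^\infty\subseteq\{k\}_{k=0}^\infty$ and assume $\lim_{k\to\infty}\sum_{l=0}^{s-1}\|T_{n_k-l}(u_{n_k-l})-u_{n_k-l}\|=0$. As in Step 1 of the proof of Theorem \ref{th:main}, the sequence $\{u_k\}$ is bounded (so $d(u_k,S)\le R$ for some $R>0$), and by \cite[Lemma 3.2]{GRZ17} together with the equivalence \eqref{pr:main:equivalence} applied to $m_k:=n_k-l$, the hypothesis yields $\lim_{k\to\infty}\|u_{n_k}-u_{n_k-l}\|=0$ for each $l=1,\ldots,s-1$. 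Next, as in Step 2, the implication \eqref{th:main2:Tk} is hereditary under passing to subsequences of the index set: for any subsequence $\{m_k\}$, the augmented-sequence trick (define $x'_m:=x_{m_k}$ when $m=m_k$ and $x'_m:=z$ for a fixed $z\in S$ otherwise) gives
\begin{equation}\label{pr:main2:hered}
\lim_{k\to\infty}\|T_{m_k}(x_k)-x_k\|=0\quad\Longrightarrow\quad\lim_{k\to\infty}\max_{i\in I_{m_k}}d(x_k,C_i)=0.
\end{equation}

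Then I would carry out the analogue of Steps 3 and 4. Put $i_k:=\argmax_{i\in I}d(u_{n_k},C_i)$; by $s$-intermittency of $\{I_k\}$ there is, for each large $k$, a smallest $l_k\in\{0,\ldots,s-1\}$ with $i_k\in I_{n_k-l_k}$. Applying \eqref{pr:main2:hered} with $m_k:=n_k-l_k$ and $x_k:=u_{n_k-l_k}$ (using the hypothesis on the $T$-displacements) gives $\max_{i\in I_{n_k-l_k}}d(u_{n_k-l_k},C_i)\to 0$, and then the triangle-inequality chain
\begin{align}\nonumber
\max_{i\in I}d(u_{n_k},C_i)
&=\|P_{C_{i_k}}(u_{n_k})-u_{n_k}\|
\le\|P_{C_{i_k}}(u_{n_k-l_k})-u_{n_k-l_k}\|+2\|u_{n_k-l_k}-u_{n_k}\|\\
&\le\max_{i\in I_{n_k-l_k}}d(u_{n_k-l_k},C_i)+2\|u_{n_k-l_k}-u_{n_k}\|\to 0,
\end{align}
together with the assumed bounded regularity of $\{C_1,\ldots,C_m\}$, yields $d(u_{n_k},S)\to 0$. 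This establishes \eqref{th:OAM:Regularity} and the proof is complete.

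I do not expect a genuine obstacle here: the argument is a strict simplification of the proof of Theorem \ref{th:main} in which the sets $A^{-1}(Q)$, $\mathcal R(A)$, the Landweber estimates \eqref{th:LanweberQNE:ineq1}--\eqref{th:LanweberQNE:ineq2}, and the case split (i)--(iii) all disappear, leaving only the single displacement estimate $\|T_{n_k-l}(u_{n_k-l})-u_{n_k-l}\|$ that is already controlled by the hypothesis. The one point that needs a little care is purely notational: since the $T_k$ here are the given cutters themselves rather than relaxations of composite SQNE operators, one should note that in \eqref{pr:main2:hered} the quantity $\|T_{m_k}(x_k)-x_k\|$ appearing in the hypothesis of \eqref{th:main2:Tk} is literally the term in \eqref{th:OAM:Regularity}, so no intermediate inequality of the form \eqref{pr:main:estimate:prod} is required.
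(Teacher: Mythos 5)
Your proposal is correct and takes essentially the same route the paper intends: the paper only remarks that Theorem \ref{th:main2} follows ``by slightly adjusting the proof of Theorem \ref{th:main}'', and your write-up is precisely that adjustment---verifying \eqref{th:OAM:Regularity} through the hereditary form of \eqref{th:main2:Tk}, the $s$-intermittency of $\{I_k\}_{k=0}^\infty$ and the bounded regularity of $\{C_1,\ldots,C_m\}$, and then invoking Theorem \ref{int:th:OAM}. The only cosmetic point is that the factor $2$ in your triangle-inequality chain can be replaced by $1$, since $d(\cdot,C_{i_k})$ is $1$-Lipschitz.
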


\bigskip
\textbf{Funding.} This work was partially supported by the Israel Science Foundation (Grants 389/12 and 820/17), the Fund for the Promotion of Research at the Technion and by the Technion General Research Fund.

\small

\bibliographystyle{spmpsci}
\bibliography{references}

\end{document}